\newtheorem{theorem}{Theorem}
\newtheorem{lemma}{Lemma}
\newtheorem{proposition}{Proposition}
\newtheorem{corollary}{Corollary}
\newtheorem{algorithm}{Algorithm}
\newtheorem{definition}{Definition}
\newtheorem{remark}{Remark}
\newtheorem{problem}{Problem}
\newtheorem{example}{Example}
\newenvironment{proof}{\paragraph{Proof:}}{\hfill$\square$}
\def\supp{\text{supp}}
\def\N{\mathbb{N}}
\def\Z{\mathbb{Z}}
\def\Q{\mathbb{Q}}
\def\R{\mathbb{R}}
\def\a{{{\alpha}}}
\def\c{{{\bm{c}}}}
\def\ba{{\bm{\alpha}}}
\def\bb{{\bm{\beta}}}
\def\x{{\bm{x}}}
\def\y{{\bm{y}}}
\def\pt{{\bm{a}}}
\def\spt{{\bm{u}}}
\def\gdeg{{d_{\g}}}
\algnewcommand{\algorithmicassumption}{\textbf{Requirement:}}
\algnewcommand{\Assume}{\item[\algorithmicassumption]}
\algnewcommand{\InlineIf}[2]{ 
  \State \algorithmicif\ #1\ \algorithmicthen\ #2}
\algnewcommand{\InlineIfElse}[3]{ 
  \State \algorithmicif\ #1\ \algorithmicthen\ #2\ \algorithmicelse\ #3}
\algnewcommand{\InlineFor}[2]{\algorithmicfor\ #1\ \algorithmicdo\ #2} 
\newcommand\g{\textbf{\textit{g}}}
\newcommand{\nneg}[1]{{\mathscr{P}({#1})}}
\newcommand{\fdef}[1]{{\it #1}}
\newcommand{\riesz}{{\mathscr{L}}}
\newcommand{\algoname}[1]{{\normalfont\textbf{#1}}}
\newcommand{\ud}{\mathrm{d}}
\title{\bf Algebraic certificates for the truncated moment problem}
\begin{document}

\author{
	Didier Henrion$^{1,2}$,
	Simone Naldi$^3$,
	Mohab Safey El Din$^4$
}
\footnotetext[1]{CNRS, LAAS, Universit\'e de Toulouse, France. }
\footnotetext[2]{Faculty of Electrical Engineering, Czech Technical University in Prague, Czechia.}
\footnotetext[3]{Universit\'e de Limoges, CNRS, XLIM, Limoges, France.}
\footnotetext[4]{Sorbonne Université, LIP6, Paris, France.}

\date{February 13, 2023}

\maketitle

\begin{abstract}
  The truncated moment problem consists of determining whether a given finite-dimensional vector of real numbers $\y$ is obtained by integrating a basis of the vector space of polynomials of bounded degree with respect to a non-negative measure on a given set $K$ of a finite-dimensional Euclidean space. This problem has plenty of applications {\it e.g.} in optimization, control theory and statistics. When $K$ is a compact semialgebraic set, the duality between the cone of moments of non-negative measures on $K$ and the cone of non-negative polynomials on $K$ yields an alternative: either $\y$ is a moment vector, or $\y$ is not a moment vector, in which case there exists a polynomial strictly positive on $K$ making a linear functional depending on $\y$ vanish. Such a polynomial is an {\it algebraic certificate} of moment unrepresentability. We study the complexity of computing such a certificate using computer algebra algorithms.
  Keywords: moments, sums of squares, semialgebraic sets, real algebraic geometry, algorithms, complexity. 
\end{abstract}

\section{Introduction}

\paragraph{Problem statement}
Let $\x = (x_1,\ldots,x_n)$ be variables, $\R[\x]$ be the ring of $n$-variate
real polynomials and for $d \in \N$, let $\R[\x]_{\leq d}$ be the vector space
of real polynomials of degree at most $d$. The multivariate monomial with
exponent $\ba = (\a_1,\ldots,\a_n) \in \N^n$ is denoted by $\x^\ba := x_1^{\a_1}
\cdots x_n^{\a_n}$, and its total degree by $|\ba| = \a_1+\cdots+\a_n$. For $\g
= (g_1,\ldots,g_k) \in \R[\x]^k$, the \fdef{basic semialgebraic set} associated
with $\g$ is
\begin{equation}
  \label{semialgset}
S(\g) = \{\pt \in \R^n : g_1(\pt) \geq 0, \ldots, g_k(\pt) \geq 0\}.
\end{equation}
Given $n,d \in \N$ and a sequence of real numbers $\y = (y_\ba)_{\ba \in \N^n_d}$
indexed by $\N^n_d = \{\ba \in \N^n : \sum_{i=1}^n\alpha_i\leq d\}$, the {\it
  truncated moment problem} (below TMP) is the question of deciding whether
there exists a nonnegative Borel measure $\mu$ on $\R^n$, with support in $K =
S(\g)$, and such that
\begin{equation}
  \label{moment}
  y_\ba = \int_K x^\ba \, d\, \mu, \,\,\,\,\,\,\,\,\, \text{ for all } \ba \in \N^n_d,
\end{equation}
If this is the case, one says that $\y$ is {\it moment-re\-pre\-sen\-ta\-ble on $K$}.
More generally, the monomial basis can be replaced by another linear basis of $\R[\x]_{\leq d}$,
({\it e.g.} Chebyshev polynomials). The TMP is the truncated version of the classical
{\it full moment problem} \cite{kuhlmann2002positivity,s17}.

\paragraph{Overview and state of the art}
The TMP is of central importance in data science. It is at the heart of several
questions in optimization, control theory or statistics, to mention just a few
application domains. It is a key ingredient to the moment-SOS (sum of squares)
approach \cite{hkl20} which consists of solving numerically non-convex
non-linear problems at the price of solving a family of finite-dimensional conic
optimization (typically semidefinite programming) problems. Mathematical
foundations of the moment problem were recently surveyed in \cite{s17,f16}.

The TMP can be interpreted as a {\em decision problem of the first order theory
  of the reals}, in which case, the
input-output data structure is as follows. The input is encoded by a
finite-di\-men\-sio\-nal real vector $\y$, whose coordinates are indexed by a
basis of the vector space of
$n$-variate polynomials of degree $\leq d$, together with finitely-many
polynomial inequalities defining a basic semialgebraic set $K \subset \R^n$.
The output is a decision yes/no whether $\y$ is obtained by integrating the
basis with respect to a non-negative measure supported on $K$.

When $K$ is compact, the TMP is dual, in the convex analysis sense, to the
problem of determining whether a polynomial is non-negative on the
semialgebraic set $K$. This latter problem is at the heart of the development
of real algebra in the twentieth century. Whereas deciding nonnegativity of a
polynomial of degree at least four can be challenging (NP-complete problems can
be cast as instances of such positivity problems \cite{l09}), there exist
algebraic certificates based on SOS for deciding strict positivity under
compactness-like assumptions that can be computed by solving a semidefinite
programming (SDP) problem \cite{powers1998algorithm}.
For positivity over compact basic semialgebraic sets, these certificates have
the form of linear combinations with SOS
coefficients of polynomials that are explicitly nonnegative on $K$
\cite{putinar1993positive,schmudgen1991}. The size of the SDP is determined by the degree
of the SOS-representation. Seminal papers
\cite{ramana1997exact,porkolab1997complexity} have started to investigate the
complexity of SDP in the context of rational arithmetic. More recent work is based
on the determinantal structure of semidefinite programs \cite{henrion2016exact}.

The specific case study of computation of SOS certificates have recently received
a lot of attention from the computer algebra community \cite{KLYZ08, kaltofen2012exact,henrion2016exact,henrion2019spectra,magron2021exact} especially for the
question whether certificates exist over the rational numbers
\cite{peyrl2008computing, SaZhi10, guo2013computing,scheiderer2016sums}.
In this work we make use of quantifier elimination for determining
bounds on the complexity of computing certificates for unrepresentability by
using quantitative results from~\cite{BPR96}. These
also rely on recent advances in the complexity analysis of Putinar
Positivstellensatz \cite{baldi2022effective}.

Note that these non-negativity problems can be solved with computer algebra
algorithms which are root-finding algorithms, hence which do not provide
algebraic certificates of non-negativity but do provide witnesses (real points)
of negativity whenever they exist. The first family of algorithms for doing so
is based on the so-called Cylindrical Algebraic Decomposition~\cite{Collins}. It
has complexity which is doubly exponential in the number of variables, which
would lead, in the context of TMP, to complexity bounds that are doubly exponential
in $n + \tbinom{n+d}{d}$.
The second family of algorithms, named critical point method, initiated by~\cite{GV88},
has complexity which is singly exponential in the number of variables~(see
\cite{BPR} and references therein).

The purpose of this communication is to leverage on these achievements and initiate the study and development of {\it computer algebra algorithms for solving the truncated moment problem for basic semialgebraic sets}.




\paragraph{Overview of the contribution}
In the wake of the mentioned duality with moments, the existence of
SOS-certificates for nonnegative polynomials in the dual side, suggests
that similar certificates might be used for the TMP on the primal side.

On the
one hand, when a measure exists whose partial moments coincide with $\y$, the
measure itself is the natural algebraic proof that allows the user to verify
directly that $\y$ is moment-representable. On the other hand, this paper shows the existence
of explicit algebraic certificates of unrepresentability: these have the form of
positive polynomials on $K$ admitting a positivity certificate and orthogonal
to the vector $\y$.

Our contribution is based on the fact that the TMP, as a decision problem, is equivalent to the
feasibility of a convex conic program in a finite dimensional vector space. More precisely, the question
is whether the interior of $\nneg{K}_d$, the cone of polynomials nonnegative on $K$ of degree at most $d$,
intersects the vanishing locus of the Riesz functional $\riesz_\y : \R[x]_{\leq d} \to \R$ defined by
$\riesz_\y\left(\sum p_\ba x^\ba\right) = \sum p_\ba y_\ba.$

When $K$ is compact, Tchakaloff's Theorem \cite{t57} states that $\y$ is moment-re\-pre\-sen\-ta\-ble
whenever $\riesz_\y$ is nonnegative on $\nneg{K}_d$, in other words, if the mentioned conic
program is {\it weakly feasible}. In this case there exists an atomic measure $\mu = \sum_{i=1}^s c_i
\delta_{x_i}$ whose moment sequence of degree $\leq d$ is $\y$: such measure is a (real) solution
of a highly structured polynomial system of type multivariate Vandermonde, which we do not investigate here.
On the other side of the coin, $\y$ is not moment-representable exactly when the conic program
is {\it strongly feasible}: in algebraic terms, this means that there exists a polynomial $p \in \nneg{K}_d$,
(strictly) positive on $K$, in the kernel of $\riesz_\y$.

In our contribution we study algorithmic aspects of the computation of such {\it unrepresentability
  algebraic certificates} when $\y$ is not moment-representable. First, we show that if the quadratic module
corresponding to the description of $K$ is archimedean, such certificates exist. We define an integer
invariant called the unrepresentability degree which measures the complexity of computing such certificate.
We give bounds on such degree that only depend on the input size of our algorithm.
When the input vector $\y$ is defined over $\Q$, and if it is not moment-representable, we show that there
exists a rational certificate of unrepresentability.


\section{Preliminaries}


\subsection{Nonnegative polynomials}
Let $K \subset \R^n$. A polynomial $f \in \R[\x]$ is called \fdef{nonnegative} on
$K$ if $f(\pt) \geq 0$ for all $\pt \in K$ and \fdef{positive} if
$f(\pt) > 0$ for all $\pt \in K$. We denote by
$$
\nneg{K}_d = \{f \in \R[\x]_{\leq d} : f(\pt) \geq 0, \,\forall\pt \in K\}
$$
the convex cone of polynomials of degree $\leq d$, nonnegative on $K$.
If $K$ is semialgebraic, then $\nneg{K}_d$ is also semialgebraic by {the
  theorem of Tarski on} quantifier elimination {over the reals} \cite{Tarski}.

We denote by $\Sigma_n \subset \R[\x]$ the \fdef{cone of sums of squares} of polynomials, and by
$\Sigma_{n,2d} = \Sigma_n \cap \R[\x]_{\leq 2d}$ its degree-$2d$ part (remark that
$\Sigma_{n,2d+1}=\Sigma_{n,2d}$). The cone $\Sigma_{n,2d}$ is full-dimensional in
$\R[\x]_{\leq 2d}$ and is contained in $\nneg{\R^n}_{2d}$.



Testing membership in cones of nonnegative polynomials over semialgebraic sets can be challenging.
Indeed, testing nonnegativity of polynomials of degree $\geq 4$ is NP-hard \cite[Sec.~1.1]{l09}.
Nevertheless $\nneg{K}_d$ contains subcones that can be represented via linear matrix inequalities,
thus testing membership in these subcones can be cast as a \fdef{semidefinite programming (SDP) problem}.

Examples of such subsets are quadratic modules: for $K = S(\g)$ as in \eqref{semialgset} and $d \in \N$, and denoting
$g_0:=1$, we define the \fdef{quadratic module} associated with $\g$
and its \fdef{truncation of order $d$} respectively
by:
\begin{align*}
  Q(\g) &= \left\{\sum_{i=0}^k \sigma_i g_i : \sigma_i \in \Sigma_n\right\} \,\,\,\,\,\,\text{ and}\\
  Q(\g)[d] &= \left\{\sum_{i=0}^k \sigma_i g_i : \sigma_i \in \Sigma_n, \,\deg(\sigma_i g_i) \leq d\right\}.
\end{align*}
If $f \in Q(\g)$, we call the polynomials $[\sigma_0,\sigma_1,\ldots,\sigma_k]$ in an expression
$f = \sum_{i=0}^k \sigma_i g_i$ a \fdef{SOS-certificate for $f \in Q(\g)$}.
The sets $Q(\g)$ and $Q(\g)[d]$ depend on the polynomials $\g$ in the description of $K$.
Denoted by $Q(g)_d := Q(\g) \cap \R[\x]_{\leq d}$, by construction one has
$Q(\g)[d] \subset Q(\g)_d$ but, in general, this inclusion is strict: in other words, for some
$\g$ there exists a polynomial $f \in Q(\g)$, of degree $d$, such that in any certificate
$f = \sum_{i=0}^k \sigma_i g_i$, at least one product $\sigma_i g_i$ has degree $>d$.

It is not even true that $Q(\g)_d \subset Q(\g)[D]$ for possibly large $D$: these quadratic modules
are called stable.

\begin{definition}[{\cite[Sec.~4.1]{marshall2008positive}}]
  For $d \in \N$, a quadratic module $Q(\g)$ is called \emph{stable in degree $d$} if there exists $D$
  such that $Q(\g)_d \subset Q(\g)[D]$. It is called \emph{stable} if it is stable
  in every $d \in \N$ (we call the function $D=D(d)$ a \emph{stability function} for $Q(\g)$).
\end{definition}
Stability in a given degree depends on the generators $\g$ whereas stability depends only on the quadratic
module $Q(\g)$ and is equivalent to the existence of degree bounds for the representation
$f = \sum_i \sigma_i g_i \in Q(\g)$ that only depend on the degree of $f$, see for instance
\cite{netzer2009stability}.
One example of stable quadratic module is the cone $\Sigma_{n,2d} = Q(0)_{2d}$: indeed, every polynomial in
$\Sigma_{n,2d}$ is a sum of squares of polynomials of degree at most $d$, that is, $Q(0)_{2d} = Q(0)[2d]$ and
hence $D(2d)=D(2d+1)=2d$ is a stability function for $\Sigma_n$.

As well as $\Sigma_{n,2d}$, truncated quadratic modules are
\fdef{semidefinite representable sets}, that is linear images of feasible sets of
SDP problems (also known as {projected spectrahedra} or spectrahedral
{shadows} in the literature): given a description $\g$ for $K = S(\g)$,
computing one polynomial in $Q(\g)[D]$ amounts to solving a single SDP problem ({\it cf.}
\Cref{ssec:naif_algo}).

\begin{definition}[{\cite{putinar1993positive}}]
  A quadratic module $Q(\g)$ is called \fdef{ar\-chi\-me\-de\-an} if there exists $u \in Q(\g)$ such that $S(u)$
  is compact.
\end{definition}

Remark that if $Q(\g)$ is archimedean, then $S(\g) \subset S(u)$ thus $S(\g)$ is compact.
Archimedeanity and stability are often mutually exclusive properties, indeed, for $n \geq 2$,
an archimedean quadratic module is not stable.

\begin{theorem}[Putinar's Positivstellensatz \cite{putinar1993positive}]
  \label{putinar}
  Let $K = S(\g)$ be non empty, and assume $Q(\g)$ is archimedean. Then every polynomial positive on $K$
  belongs to $Q(\g)$.
\end{theorem}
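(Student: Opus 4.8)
The plan is to deduce the statement from the abstract archimedean Positivstellensatz (Jacobi's representation theorem) for quadratic modules in a commutative ring, specialized to $A = \R[\x]$ and $M = Q(\g)$. Two things then have to be matched with that abstract result: a reformulation of the archimedean hypothesis, and the identification of the set of $\R$-valued characters of $\R[\x]$ that are nonnegative on $Q(\g)$ with $K$ itself.

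First I would record the standard consequence of archimedeanity: there is $\lambda \in \N$ with $\lambda - \sum_{i=1}^n x_i^2 \in Q(\g)$. Since every element of $Q(\g)$ is nonnegative on $S(\g)$, one has $S(\g) \subseteq S(u)$, which is compact, so $\lambda - \sum_{i=1}^n x_i^2 > 0$ on $S(u)$ for $\lambda$ large; this polynomial then lies in the quadratic module generated by the single constraint $u$, hence in $Q(\g)$ (this step, a single-constraint Positivstellensatz, is standard, and this reformulation is often adopted as the definition of archimedeanity). From $\lambda - \sum_i x_i^2 \in Q(\g)$ together with $\Sigma_n \cdot Q(\g) \subseteq Q(\g)$, one checks elementarily that $N - x_i \in Q(\g)$ and $N + x_i \in Q(\g)$ for $N \geq \sqrt{\lambda}$; since the set $\{p \in \R[\x] : \exists N \in \N, \ N - p \in Q(\g) \text{ and } N + p \in Q(\g)\}$ is a subring of $\R[\x]$ containing $\R$ and $x_1, \dots, x_n$, it is all of $\R[\x]$. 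Hence $Q(\g)$ is archimedean in the abstract sense: for every $p \in \R[\x]$ there is $N \in \N$ with $N - p \in Q(\g)$.

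The core is then the representation theorem: if $M$ is an archimedean quadratic module in a commutative ring $A$ with $\tfrac{1}{2} \in A$, then every $a \in A$ with $\varphi(a) > 0$ for every unital ring homomorphism $\varphi : A \to \R$ satisfying $\varphi(M) \subseteq \R_{\geq 0}$ already belongs to $M$. I would either invoke this from standard references on positive polynomials or sketch it: if $a \notin M$, use Zorn's lemma to enlarge $M$ to a quadratic module $P$ maximal with respect to $a \notin P$; then $P$ is again archimedean, $\mathfrak{p} := P \cap (-P)$ is an ideal (using $\tfrac{1}{2} \in A$) and in fact prime by maximality of $P$, the domain $A/\mathfrak{p}$ carries the order induced by the image of $P$, archimedeanity forces this order to be archimedean, so the ordered fraction field of $A/\mathfrak{p}$ embeds order-preservingly into $\R$; the resulting character $\varphi$ has $\varphi(M) \subseteq \R_{\geq 0}$ and $\varphi(a) \leq 0$ by maximality of $P$, a contradiction. (The point where "$\varphi(a) > 0$ pointwise" has to be upgraded to a uniform lower bound follows from compactness of the space of such characters, itself a consequence of archimedeanity.)

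Finally I would specialize. Every unital ring homomorphism $\R[\x] \to \R$ restricts to the identity on $\R$, hence is the evaluation $\operatorname{ev}_{\pt}$ at a unique $\pt \in \R^n$; since $\operatorname{ev}_{\pt}(\sigma) = \sigma(\pt) \geq 0$ for every $\sigma \in \Sigma_n$, we have $\operatorname{ev}_{\pt}(Q(\g)) \subseteq \R_{\geq 0}$ if and only if $g_i(\pt) \geq 0$ for $i = 1, \dots, k$, i.e. if and only if $\pt \in S(\g) = K$; the hypothesis $K \neq \emptyset$ guarantees that such characters exist, so $Q(\g) \neq \R[\x]$ and the statement is not vacuous. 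Thus the characters nonnegative on $Q(\g)$ are exactly the points of $K$, so "$f$ positive on $K$" says precisely that $\varphi(f) > 0$ for every character $\varphi$ nonnegative on $Q(\g)$, and the representation theorem yields $f \in Q(\g)$. The main obstacle is the representation theorem itself — specifically, extracting a genuine real character from the maximal module $P$, i.e. showing the order induced on $A/\mathfrak{p}$ is archimedean and therefore realizable inside $\R$; the two dictionary translations are then routine, with the remaining hidden work being the (standard) reformulation of the archimedean hypothesis in the first step. (Alternatively, one can follow Putinar's original functional-analytic route: separate $f$ from $Q(\g)$ by a linear functional, use the bound $\lambda - \sum x_i^2 \in Q(\g)$ to turn it into a positive measure supported on $K$, and contradict positivity of $f$ there — trading the representation theorem for a Haviland-type measure-reconstruction statement on the compact set $K$.)
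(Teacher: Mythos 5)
The paper does not prove this statement: it is quoted as a known result with a citation to Putinar's original article, so there is no in-paper argument to compare yours against. Your proposal is a correct and complete-in-outline rendering of the standard algebraic proof via the abstract archimedean representation theorem (Jacobi/Prestel, as in Marshall's book), which is a different route from Putinar's original functional-analytic one (separation by a linear functional, Haviland/Riesz representation on the compact set) that you correctly mention as an alternative. Two points deserve emphasis because they carry the real mathematical weight. First, the paper's definition of archimedean is the geometric one (some $u \in Q(\g)$ with $S(u)$ compact), so your opening step --- deducing $\lambda - \sum_i x_i^2 \in Q(\g)$ --- genuinely requires the single-constraint Positivstellensatz for the compact set $S(u)$ (equivalently Schm\"udgen's theorem for one generator, where preordering and quadratic module coincide); this is logically prior to Putinar's theorem and not circular, but it is a nontrivial imported ingredient, and you are right to flag it. Second, the Zorn's-lemma construction of a real character from a maximal quadratic module $P$ with $a \notin P$ (showing $P \cup -P = A$, that $P \cap (-P)$ is a prime ideal, and that the induced order on the quotient is archimedean hence embeds in $\R$) is where the theorem is actually proved; your sketch of it is accurate, and note that in this contrapositive form the uniform-lower-bound/compactness remark you make is not even needed. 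The final dictionary step --- unital $\R$-characters of $\R[\x]$ are point evaluations, and those nonnegative on $Q(\g)$ are exactly the evaluations at points of $K$ --- is correct as you state it. In short: a valid proof, by the now-standard algebraic route rather than the original one, with the heavy lifting correctly localized and attributed.
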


The problem of bounding the degree $D$ of the summands in a Putinar certificate $f=\sum_{i=0}^k \sigma_i g_i$
for a polynomial $f \in \nneg{K}_d$, is called the \fdef{effective} Putinar Positivstellensatz,
see \cite{nie2007complexity,baldi2022effective}. The work \cite{baldi2022effective} gives a bound for $D$ as a
function of $d,n$, of the polynomial $f$ and of geometrical parameters of $K$, see \cite[Th.~1.7]{baldi2022effective},
{\it cf.} \Cref{bound_unrepr_deg}.

\subsection{Moments}

A \fdef{nonnegative Borel measure} (below \fdef{measure}, for short) $\mu$
is a bounded nonnegative linear functional on the $\sigma$-algebra of Borel
sets $\mathscr{B}(\R^n)$. The \fdef{support} of $\mu$ is the complement
of the largest open Borel set $A \in \mathscr{B}(\R^n)$ such that $\mu(A) = 0$,
denoted by $\supp(\mu)$.

Let $K \subset \R^n$ be a Euclidean closed set. A measure $\mu$ is
\fdef{supported} on $K$ if $\supp(\mu) \subset K$
(in particolar it satisfies $\mu(\R^n \setminus K) = 0$).
For $\spt \in K$, we denote by $\delta_{\spt}$ the Dirac measure supported on the
singleton $\{\spt\}$. A finite linear combinations $\sum_{i=1}^s c_i \delta_{\spt_i}$
of $s$ Dirac measures is called $s$-\fdef{atomic}: its support is
$\supp(\sum_{i=1}^s c_i \delta_{\spt_i})=\{\spt_1,\ldots,\spt_s\}$.

For $\ba \in \N^n$, the \fdef{(monomial) moment of exponent $\ba$} of $\mu$ is
the real number $\int_K \x^{\ba} \, \ud\mu(\x)$ as in \eqref{moment}.
We say that $\mu$ satisfying \eqref{moment} is a \fdef{representing measure} for the
sequence $\y=(y_\ba)_{\ba \in \N^n_d}$.
If $K$ is compact, the Stone-Weierstrass Theorem implies that a measure is uniquely determined
by its (infinite-dimensional) sequence of monomial moments,
see \cite[Cor.~3.3.1]{marshall2008positive}.

In this work, we address the following inverse problem for semialgebraic sets:

\begin{problem}[Truncated Moment Problem {\cite[Ch.~17-18]{s17}}]
  \label{TMP}
  Let $K \subset \R^n$ be a basic closed semialgebraic set. Given a finite sequence
  $\y=(y_{\ba})_{\ba \in \N^n_d}$ of real numbers, determine whether $\y$
  admits a representing measure supported on $K$.
\end{problem}

For $\y = (y_{\ba})_{\ba \in \N^n}$, the matrix $M_d(\y) = (y_{\ba+\bb})_{\ba,\bb \in \N^n_d}$
is called the \fdef{moment matrix of order $d$} of $\y$.
We recall that we denote by $\riesz_\y : \R[\x]_{\leq d} \to \R$
the \fdef{Riesz functional associated with $\y$}, defined by $\riesz_\y(\x^\ba)=y_{\ba}$ and
extended linearly on $\R[\x]_{\leq d}$.

Let now $K = S(\g)$ be a basic closed semialgebraic set, and let
\begin{align*}
  \mathscr{M}(K)_d
  = \Big\{& \y = (y_{\ba})_{\ba \in \N^n_d} \in \R^m : \exists\,\mu, \supp(\mu) \subset K, \\
  & \forall\ba\in\N^n_d, \, y_{\ba} = \int_K \x^{\ba} d\mu(\x)\Big\} 
\end{align*}
denote the set of moments of order up to $d$ of nonnegative Borel measures with support in $K$. The
set $\mathscr{M}(K)_d$ is in general not closed, as shown by the following example.

\begin{example}[{\cite[Rem.~3.147]{BPT}}]
  \label{example_not_closed}
  Let $n=1, d=4$ and $\y=(1,0,0,0,1)$.
  Its second moment matrix $M_2(\y)$
  is positive semidefinite but
  the vector $\y$ is not representable by a nonnegative univariate measure, indeed $y_2 = 0$ but
  $y_4 \neq 0$. Thus $\y \not\in \mathscr{M}(\R)_4$. Nevertheless $\y \in \overline{\mathscr{M}(\R)_4}$,
  the Euclidean closure of $\mathscr{M}(\R)_4$: indeed $\y = \lim_{\epsilon \to 0} \y_\epsilon$
    where $\y_\epsilon = (1,0,\epsilon^2,0,1)$ is the sequence of moments of degree $\leq 4$ of the
    $3-$atomic measure
    $$
    \mu_\epsilon = \frac{\epsilon^4}{2} \left(\delta_{\frac{1}{\epsilon}}+\delta_{-\frac{1}{\epsilon}}\right)+(1-\epsilon^4)\delta_0
    $$
  \hfill$\Box$
\end{example}

Let $V^\vee$ be the {dual vector space} of a real vector space $V$, that
is the set of $\R$-linear functionals $L : V \to \R$. If $C \subset V$ is a convex cone, the set
$C^* = \{L \in V^\vee : L(\pt) \geq 0, \,\forall\pt \in C\}$ is called the {dual cone} of $C$.
It is straightforward to see that $\mathscr{M}(K)_d \subset \nneg{K}_d^*$, and a non-trivial result
is that equality holds for $K$ compact:

\begin{theorem}[{Tchakaloff's Theorem \cite{t57}}]
\label{tchakaloff}
Let $K \subset \R^n$ be compact and $d \in \N$. Then
\[
\mathscr{M}(K)_d = \nneg{K}_d^* = 
\{\y \in \R^m : \riesz_\y(p) \geq 0, \:\forall p \in \nneg{K}_d\}.
\]
\end{theorem}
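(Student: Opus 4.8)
The plan is to realize both sides of the identity as dual cones of a single explicit convex cone and then apply the bipolar theorem, the only substantial input being a closedness statement in which compactness of $K$ is used. Identify $\R[\x]_{\leq d}$ with $\R^m$ via coefficient vectors, so that the pairing of a polynomial $p = \sum_\ba p_\ba \x^\ba$ with $\y$ is $\riesz_\y(p) = \sum_\ba p_\ba y_\ba$; under this identification $\nneg{K}_d^*$ is exactly the right-hand side of the claim. Let $v_d \colon \R^n \to \R^m$ be the moment (Veronese) map $v_d(\spt) = (\spt^\ba)_{\ba \in \N^n_d}$, and set $C := \mathrm{cone}(v_d(K))$, the convex conical hull of $v_d(K)$. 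Two routine inclusions reduce the theorem to a single point: first $C \subseteq \mathscr{M}(K)_d$, because a finite nonnegative combination $\sum_i c_i v_d(\spt_i)$ with $\spt_i \in K$ is precisely the moment vector of the atomic measure $\sum_i c_i \delta_{\spt_i}$, supported on $K$; and second $\mathscr{M}(K)_d \subseteq \nneg{K}_d^*$, because for a representing measure $\mu$ of $\y$ and $p \in \nneg{K}_d$ one has $\riesz_\y(p) = \int_K p \, \ud\mu \geq 0$. Hence it suffices to show $\nneg{K}_d^* \subseteq C$.

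Next I would compute the dual cone of $C$. Since $C$ is generated as a cone by $\{v_d(\spt) : \spt \in K\}$, a polynomial $p$, viewed as the linear functional $\y \mapsto \riesz_\y(p)$ on $\R^m$, is nonnegative on $C$ if and only if $\riesz_{v_d(\spt)}(p) = p(\spt) \geq 0$ for every $\spt \in K$, that is if and only if $p \in \nneg{K}_d$. So $C^* = \nneg{K}_d$ under the identification above. Assuming $C$ is a closed convex cone, the bipolar theorem in $\R^m$ then gives $C = C^{**} = (\nneg{K}_d)^* = \nneg{K}_d^*$, which together with the two inclusions forces $C = \mathscr{M}(K)_d = \nneg{K}_d^*$, proving the theorem. (Alternatively, without invoking the bipolar theorem: if $\y \in \nneg{K}_d^* \setminus C$, separate $\y$ from the closed convex cone $C$ to obtain a polynomial $p$ with $\riesz_\y(p) < 0$ and $p \geq 0$ on $K$, contradicting $\y \in \nneg{K}_d^*$.) The case $K = \emptyset$ is trivial, all three sets being $\{0\}$ since then $\nneg{\emptyset}_d = \R[\x]_{\leq d}$.

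The crux, and the step I expect to be the main obstacle, is that $C = \mathrm{cone}(v_d(K))$ is closed. I would isolate the general lemma that the conical hull of a compact set $S \subset \R^m$ with $0 \notin \mathrm{conv}(S)$ is closed: separating $0$ from the compact convex set $\mathrm{conv}(S)$ yields $\ell$ with $\langle \ell, s \rangle \geq \delta > 0$ for all $s \in S$; for any convergent sequence $c^{(j)} \to c_\infty$ with $c^{(j)} \in \mathrm{cone}(S)$, write $c^{(j)} = \sum_{i=1}^m t_i^{(j)} s_i^{(j)}$ with $t_i^{(j)} \geq 0$ and $s_i^{(j)} \in S$ (conical Carath\'eodory); pairing with $\ell$ bounds $\sum_i t_i^{(j)}$, hence each $t_i^{(j)}$, uniformly in $j$, and passing to a subsequence along which $t_i^{(j)} \to t_i^\infty \geq 0$ and $s_i^{(j)} \to s_i^\infty \in S$ yields $c_\infty = \sum_i t_i^\infty s_i^\infty \in \mathrm{cone}(S)$. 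I would then apply this with $S = v_d(K)$: it is compact since $K$ is compact and $v_d$ continuous, and $0 \notin \mathrm{conv}(v_d(K))$ because the coordinate of $v_d(\spt)$ indexed by the zero exponent is the constant $1$, so every convex combination has that coordinate equal to $1$. The remainder is formal convex duality; in particular $\mathscr{M}(K)_d = C$ is closed (in contrast with the non-compact situation of \Cref{example_not_closed}), and the conical Carath\'eodory bound shows that every $\y \in \mathscr{M}(K)_d$ is represented by an atomic measure with at most $m$ atoms.
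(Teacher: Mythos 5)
Your proof is correct. Note that the paper does not prove \Cref{tchakaloff} at all: it is imported by citation, with the reader pointed to Laurent's survey (Theorem 5.13 of \cite{l09}) for a modern proof --- and that proof is essentially the one you give, namely realizing $\mathscr{M}(K)_d$ as the conical hull $C=\mathrm{cone}(v_d(K))$ of the compact moment image, computing $C^*=\nneg{K}_d$, and closing the loop with the bipolar theorem. You correctly identify the only non-formal step, the closedness of $C$, and your lemma (compact generating set $S$ with $0\notin\mathrm{conv}(S)$ implies $\mathrm{cone}(S)$ closed, via strict separation plus conical Carath\'eodory and a compactness extraction) is sound; for $S=v_d(K)$ you could even take the separating functional explicitly to be the coordinate indexed by $\ba=0$, which is identically $1$ on $S$. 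Two small bonuses of your route are worth flagging: the Carath\'eodory bound shows every $\y\in\mathscr{M}(K)_d$ admits an $s$-atomic representing measure with $s\leq m$, which is exactly the statement the paper later invokes as item $A_3$ of \Cref{prop:feas} via a citation to \cite[Th.~17.2]{s17}; and you obtain that $\mathscr{M}(K)_d$ is closed for compact $K$, the correct counterpoint to \Cref{example_not_closed}. The empty-set case and the identification of $\nneg{K}_d^*$ with the displayed set via $\y\mapsto\riesz_\y$ are handled properly.
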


\Cref{tchakaloff} is a finite-dimensional version of the Riesz-Haviland Theorem
\cite{cf08}. It is often used in the moment-SOS hierarchy, see \cite[Lemma
1.7]{hkl20}. A modern statement and proof can be found {\it e.g.} in \cite[Theorem
5.13]{l09}, see also \cite{blekhermanCoreVariety}.

\section{An algorithm for the moment problem}

We describe an algorithm based on semidefinite programming that solves Problem \ref{TMP}
for compact basic semialgebraic sets.

To do that, we first give a characterization of the interior
of cones of nonnegative polynomials on compact sets $K \subset \R^n$ (\Cref{ssec:interior}).
Next we interpret \Cref{TMP} as a conic feasibility problem and prove that its
solvability is related to the feasibility type of the program (\Cref{ssec:conic}). Finally we describe
our algorithm in \Cref{ssec:naif_algo}.

\subsection{Interior of $\nneg{K}_d$}
\label{ssec:interior}
Let $K \subset \R^n$ be non-empty.
For $f \in \R[\x]$, we denote by $f^* := \inf_{x \in K} f(x)$, possibly $-\infty$.

It is straightforward to construct examples of positive polynomials $f \in \nneg{K}$,
on a non-compact set $K$, such that $f \not\in \text{Int}(\nneg{K})$ even if $f^* > 0$
(for instance $1 = \lim_{\epsilon \to 0} 1-\epsilon x$ so $1 \not\in \text{Int}(\nneg{\R}_2)$).
More generally, positive sum-of-squares polynomials of degree $<d$ lie in
the boundary of the cone of nonnegative polynomials of degree $\leq d$
({\it cf.} \cite[\S 4.4.3]{BPT}).



The next folklore lemma shows that, for compact sets, the interior of $\nneg{K}_d$
is exactly the set of positive polynomials over $K$.

\begin{lemma}
  \label{lem:interior}
  Let $K \subset \R^n$ be non-empty, and let $d \in \N$.
Then $\text{Int}(\nneg{K}_d) \subset \{f \in \R[\x]_{\leq d} : f^* > 0\}$. If $K$ is compact, equality holds, and $\text{Int}(\nneg{K}_d)$ consists of exactly those polynomials in $\R[\x]_{\leq d}$ that are positive on $K$.
\end{lemma}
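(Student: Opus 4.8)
The plan is to prove the two inclusions separately, with only the second one requiring compactness. Throughout I would fix a norm $\|\cdot\|$ on the finite-dimensional space $\R[\x]_{\leq d}$; all norms on it are equivalent, so $\text{Int}(\cdot)$ is unambiguous, and I may switch freely to the coefficient $\ell^1$-norm $\|g\|_1 = \sum_\ba |g_\ba|$ when convenient.

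For the inclusion $\text{Int}(\nneg{K}_d) \subset \{f \in \R[\x]_{\leq d} : f^* > 0\}$, which holds for every non-empty $K$, I would take $f$ in the interior, choose $\epsilon > 0$ with the $\epsilon$-ball $\{g : \|g-f\| < \epsilon\}$ contained in $\nneg{K}_d$, and perturb in the single direction of the constant polynomial $1$: the polynomial $f - \tfrac{\epsilon}{2\|1\|}\cdot 1$ still lies in that ball, hence in $\nneg{K}_d$, which forces $f(\pt) \geq \tfrac{\epsilon}{2\|1\|}$ for all $\pt \in K$ and therefore $f^* > 0$. Choosing the perturbation direction so that the resulting polynomial is bounded below by a \emph{positive} constant (rather than just nonnegative) is the one point in this part that needs a little care; the strictness of the inclusion for unbounded $K$ is already illustrated in the text by $1 \notin \text{Int}(\nneg{\R}_2)$.

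For the reverse inclusion when $K$ is compact, the crux is a uniform bound on point evaluations. Since $K$ is bounded, say $K \subset \{\x : \|\x\|_\infty \leq R\}$, one has $|g(\pt)| \leq \sum_\ba |g_\ba|\,|\pt^\ba| \leq (\max\{1,R\})^d\,\|g\|_1$ for all $\pt \in K$, and by norm equivalence this yields a constant $C_K > 0$ depending only on $K$ and $d$ with $|g(\pt)| \leq C_K\|g\|$ for every $\pt \in K$ and every $g \in \R[\x]_{\leq d}$; this is where I expect the only (still routine) effort to go. Granting it, take $f$ with $f^* > 0$: compactness turns the infimum into a minimum, so $m := \min_{\pt \in K} f(\pt) > 0$, and for any $g$ with $\|g-f\| < m/C_K$ and any $\pt \in K$ we get $g(\pt) \geq m - |(g-f)(\pt)| \geq m - C_K\|g-f\| > 0$. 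Hence $g \in \nneg{K}_d$, so the $(m/C_K)$-ball around $f$ lies in $\nneg{K}_d$ and $f \in \text{Int}(\nneg{K}_d)$.

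Finally I would combine the two inclusions to conclude $\text{Int}(\nneg{K}_d) = \{f \in \R[\x]_{\leq d} : f^* > 0\}$ for compact $K$, and observe that on a compact set the conditions $f^* > 0$ and ``$f > 0$ on $K$'' coincide (one direction is immediate, and conversely a positive continuous function on a compact set attains a positive minimum). This gives the final assertion of the lemma. The whole argument is elementary finite-dimensional analysis; the only genuinely load-bearing ingredient is the evaluation bound $C_K$, which is precisely what fails when $K$ is unbounded.
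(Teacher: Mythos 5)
Your proof is correct. The first inclusion is handled by essentially the same device as in the paper: you perturb $f$ in the direction of the constant polynomial $1$ and observe that $f-\epsilon'$ must remain in $\nneg{K}_d$, whereas the paper phrases the same idea contrapositively ($f^*\leq 0$ forces $f-\epsilon\notin\nneg{K}_d$ for every $\epsilon>0$). The genuine divergence is in the reverse inclusion for compact $K$. The paper argues by contraposition with a soft sequential compactness argument: if $f\notin\text{Int}(\nneg{K}_d)$ it is a limit of polynomials $f_k$ with $f_k^*<0$, one picks minimizers $x_k\in K$, extracts a convergent subsequence by Bolzano--Weierstrass, and concludes $f^*\leq f(\overline{x})\leq 0$. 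You instead give a direct, quantitative argument: the evaluation functionals $g\mapsto g(\pt)$, $\pt\in K$, are uniformly bounded on the finite-dimensional space $\R[\x]_{\leq d}$ by a constant $C_K$ (via the coefficient $\ell^1$-norm and $K\subset[-R,R]^n$), so the explicit ball of radius $m/C_K$ around $f$ sits inside $\nneg{K}_d$. Your route buys an effective radius of interiority and makes transparent that only \emph{boundedness} of $K$ is used for this inclusion (compactness entering only to identify $f^*>0$ with positivity on $K$ in the final sentence); the paper's route is shorter to write but non-constructive. Both arguments are complete and elementary, and your identification of the evaluation bound as the one load-bearing step, failing exactly when $K$ is unbounded, is accurate.
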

\begin{proof}
  If $f \in \R[\x]_{\leq d}$ is such that $f^* \leq 0$, then $(f-\epsilon)^* = f^*-\epsilon < 0$ for all $\epsilon > 0$, thus
  $f-\epsilon \not\in \nneg{K}_d$ for all $\epsilon > 0$, hence $f \not\in \text{Int}(\nneg{K}_d)$, which proves the sought inclusion $\subset$.
  
  Now assume $K$ is compact, and let $f \not\in \text{Int}(\nneg{K}_d)$. Then $f$ is  in the closure of the complement of $\nneg{K}_d$ in $\R[\x]_{\leq d}$, that is, $f$ is the  pointwise limit $f = \lim_{k \to \infty} f_k$ of polynomials $f_k \not\in \nneg{K}_d$, in particular, satisfying $f_k^* < 0$ for all $k$.  Since $K$ is compact, $f_k^* = \min_{x \in K} f_k(x) = f_k(x_k)$  for some $x_k \in K$. Let $\overline{x} \in K$ be a limit point of $\{x_k\}_k$, which exists by Bolzano-Weierstrass Theorem. Thus up to extracting a subsequence, one has $0 \geq \lim_{k} \, f_k^* = \lim_{k} f_k(x_k) = f(\overline{x}) \geq f^*$, which shows the inclusion $\supset$, thus the equality $\text{Int}(\nneg{K}_d) = \{f \in \R[\x]_{\leq d} : f^* > 0\}$. Since the infimum of a polynomial function on a compact set is its minimum, one has $f^* > 0$ if and only if $\min_{x \in K} f(x) > 0$ if and only if $f$ is positive on $K$, as claimed.
\end{proof}

\begin{remark}
  \Cref{putinar} and \Cref{lem:interior} ensure that if $Q(\g)$ is ar\-chi\-me\-de\-an,
  then $\text{Int}(\nneg{K}_d) \subset Q(\g) \cap \R[\x]_{\leq d} \subset
  \nneg{K}_d$. Thus under this assumption, \cite[Th.~1.7]{baldi2022effective} yields a degree bound $D =
  D(d,n,f,K)$ such that if $f \in \text{Int}(\nneg{K}_d)$ then $f \in Q_D(\g)$.
  Since $Q_D(\g)$ is semidefinite representable, it can be sampled through semidefinite programming:
  solving such optimization problem yields an element of the boundary of $Q_D(\g)$, thus 
  this might not be sufficient to compute an element of $\text{Int}(\nneg{K}_d)$.
\end{remark}

The following Corollary shows that one can get elements of $\text{Int}(\nneg{K}_d)$ as well
through semidefinite programming, from the knowledge of a polynomial description $\g$ of $K$.

\begin{corollary}
  \label{cor:minusone}
  Let $\g$ be such that $Q(\g)$ is archimedean, and let $K=S(\g)$.
  Let $f \in \text{Int}(\nneg{K}_d)$ and $0 < \delta < f^* = \min_K f$. Then $\frac{1}{\delta} f - 1 \in
  \text{Int}(\nneg{K}_d)$ and there exist $\sigma_0^\delta,\sigma_1^\delta, \ldots,\sigma_k^\delta
  \in \Sigma_n$ such that
  $$
  \frac{1}{\delta} f - 1 = \sigma_0^\delta+\sum_i \sigma_i^\delta g_i.
  $$
\end{corollary}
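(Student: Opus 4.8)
The plan is to deduce Corollary~\ref{cor:minusone} from the two ingredients assembled just above it: Putinar's Positivstellensatz (\Cref{putinar}) and the characterization of the interior of $\nneg{K}_d$ from \Cref{lem:interior}. First I would check that $\frac{1}{\delta} f - 1$ is again a positive polynomial on $K$: since $\delta < f^* = \min_K f$, for every $\pt \in K$ we have $f(\pt) \geq f^* > \delta > 0$, hence $\frac{1}{\delta} f(\pt) - 1 \geq \frac{f^*}{\delta} - 1 > 0$. Because $Q(\g)$ is archimedean, $K = S(\g)$ is compact, so \Cref{lem:interior} applies and gives that positivity on $K$ is exactly membership in $\text{Int}(\nneg{K}_d)$; this yields $\frac{1}{\delta} f - 1 \in \text{Int}(\nneg{K}_d) \subset \nneg{K}_d$, and in particular this polynomial has degree $\leq d$ since $f$ does and subtracting a constant does not raise the degree.

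Next I would invoke \Cref{putinar} directly on the polynomial $p := \frac{1}{\delta} f - 1$: it is strictly positive on the nonempty compact set $K = S(\g)$, and $Q(\g)$ is archimedean, so $p \in Q(\g)$. Unwinding the definition of the quadratic module, this means precisely that there exist sums of squares $\sigma_0^\delta, \sigma_1^\delta, \ldots, \sigma_k^\delta \in \Sigma_n$ with
$$
\frac{1}{\delta} f - 1 = \sigma_0^\delta + \sum_{i=1}^k \sigma_i^\delta g_i,
$$
which is exactly the claimed representation (recall $g_0 := 1$, so the term $\sigma_0^\delta$ is $\sigma_0^\delta g_0$). That completes the argument.

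Honestly, there is no real obstacle here: the corollary is an immediate packaging of \Cref{lem:interior} and \Cref{putinar}, and the only things to verify are the elementary inequality $\frac{f^*}{\delta} > 1$ and the bookkeeping that $\deg(\frac{1}{\delta} f - 1) \leq d$. The one subtlety worth a sentence is that Putinar's theorem as stated gives membership in $Q(\g)$ with \emph{no} degree control on the $\sigma_i^\delta$ — this is why the statement does not claim $\frac{1}{\delta} f - 1 \in Q(\g)[D]$ for any explicit $D$; obtaining such a $D$ would require the effective version \cite{baldi2022effective} and is exactly the content of the later results on the unrepresentability degree rather than of this corollary.
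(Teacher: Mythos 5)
Your proof is correct and follows essentially the same route as the paper: establish that $\tfrac{1}{\delta}f-1$ is positive on the compact set $K$, use \Cref{lem:interior} to place it in $\text{Int}(\nneg{K}_d)$, and apply \Cref{putinar} for the SOS representation. The only cosmetic difference is that the paper first notes $f-\delta\in\text{Int}(\nneg{K}_d)$ and then scales by $1/\delta$ using the cone property, whereas you verify the positivity of $\tfrac{1}{\delta}f-1$ directly; your closing remark about the absence of degree control in Putinar's theorem is also accurate and consistent with the paper's later discussion.
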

\begin{proof}
  The polynomial $f-\delta$ is positive on $K$, thus by
  \Cref{lem:interior}, $f-\delta \in \text{Int}(\nneg{K}_d)$ and hence 
  ${(f-\delta)}/{\delta} = \frac{1}{\delta} f - 1 \in \text{Int}(\nneg{K}_d)$, since
  $\text{Int}(\nneg{K}_d)$ is a cone.
  Since $\frac{1}{\delta} f - 1$ is positive on $K$, we conclude by \Cref{putinar}.
\end{proof}

\Cref{cor:minusone} can be rephrased as follows: if $Q(\g)$ is archimedean and
$0 < \delta < f^* = \min_K f$, then $f/\delta \in 1+Q(\g)$. Remark that (unless
$Q(\g)$ is stable in the degree of $f$) the degrees of the SOS-multipliers for a
SOS-certificate $f/\delta \in 1+Q(\g)$ depend on $\delta$ and might be
larger than the degrees for a SOS-certificate $f \in Q(\g)$.

\subsection{Moment problem as conic feasibility}
\label{ssec:conic}

Let $C \subset V$ be a convex cone with non-empty interior, and let
$L \subset V$ be an affine space.
The \fdef{conic program} associated with $C$ and $L$ is
called \fdef{feasible} if $L \cap C \neq \emptyset$, otherwise \fdef{infeasible}.
It is called \fdef{strongly feasible} if $L \cap \text{Int}(C) \neq \emptyset$, and \fdef{weakly feasible}
if it is feasible but not strongly.

If $L$ is a linear space (that is if $0 \in L$) then
$\{0\} \subset (L \cap C)$, thus $L \cap C$ is always feasible.
If $L$ is a hyperplane, then the corresponding program is weakly feasible if and only if
$L \cap C$ is a proper face of $C$ and in this case $L$ is called a
\fdef{supporting hyperplane} for $C$: geometrically, $C$ is contained in one of the two
closed half-spaces bounded by $L$, and $L$ is tangent to the boundary of $C$.




\begin{proposition}
  \label{prop:feas}
  Let $\y = (y_\ba)_{\ba \in \N^n_d} \in \R^m$, $m=\binom{n+d}{d}$ with $\y_0>0$.
  Let $\riesz_\y \in (\R[\x]_{\leq d})^\vee$
  be the Riesz functional of $\y$, and $L_\y = \{p \in \R[\x]_{\leq d} : \riesz_\y(p)=0\}$.
  Let $K = S(\g) \subset \R^n$.
  The following are equivalent:
  \begin{enumerate}
  \item[$A_1$.]
    $\y \in \mathscr{M}(K)_d$;
  \item[$A_2$.]
    The conic program $L_\y \cap \nneg{K}_d$ is weakly feasible;
  \item[$A_3$.]
    There exist $\spt_1,\ldots,\allowbreak\spt_s \in K$, with $s \leq m$,
    such that $\y$ admits a representing measure $\mu$ with $\text{supp}(\mu) = \{\spt_1,\ldots,\spt_s\}$
    and $L_\y \cap \nneg{K}_d = \{p \in \nneg{K}_d : p(\spt_1) = 0, \ldots, p(\spt_s) = 0\}$.
  \end{enumerate}
  Moreover, the following are equivalent and are strong alternatives to $A_1$-$A_2$-$A_3$:
  \begin{enumerate}
  \item[$B_1$.]
    $\y \not\in \mathscr{M}(K)_d$;
  \item[$B_2$.]
    The conic program $L_\y \cap \nneg{K}_d$ is strongly feasible.
  \end{enumerate}
\end{proposition}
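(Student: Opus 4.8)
The statement has three ingredients: the equivalence $A_1\Leftrightarrow A_2$, the equivalence $A_1\Leftrightarrow A_3$, and the ``strong alternative'' assertion. I would first record two elementary observations that make the last ingredient automatic. Since $y_0>0$ we have $\riesz_\y(1)=y_0>0$, so $\riesz_\y$ is a nonzero functional, $L_\y$ is a genuine hyperplane, and any linear functional vanishing at some point of $\text{Int}(\nneg{K}_d)$ must vanish identically. Since $K$ is compact (the standing assumption of this section), \Cref{lem:interior} gives $1\in\text{Int}(\nneg{K}_d)$, so the cone has non-empty interior. Finally $0\in L_\y\cap\nneg{K}_d$, so the conic program $L_\y\cap\nneg{K}_d$ is always feasible, whence ``weakly feasible'' is exactly the negation of ``strongly feasible''; thus $A_2=\neg B_2$, and $B_1=\neg A_1$ by definition, so it suffices to prove $A_1\Leftrightarrow A_2$ and $A_1\Leftrightarrow A_3$, and the strong-alternative clause then follows formally.

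For $A_1\Rightarrow A_2$: if $\y\in\mathscr{M}(K)_d$ then \Cref{tchakaloff} gives $\riesz_\y(p)\ge 0$ for all $p\in\nneg{K}_d$. Were the program strongly feasible, there would be $q\in\text{Int}(\nneg{K}_d)$ with $\riesz_\y(q)=0$; then $q\pm\epsilon r\in\nneg{K}_d$ for every $r\in\R[\x]_{\le d}$ and small $\epsilon>0$, so $0\le\riesz_\y(q\pm\epsilon r)=\pm\epsilon\,\riesz_\y(r)$ forces $\riesz_\y\equiv 0$, contradicting $\riesz_\y(1)=y_0>0$; so the program is not strongly feasible, and being feasible it is weakly feasible. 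For the converse I would prove the contrapositive $B_1\Rightarrow B_2$: if $\y\notin\mathscr{M}(K)_d=\nneg{K}_d^*$ (again \Cref{tchakaloff}) there is $p\in\nneg{K}_d$ with $\riesz_\y(p)<0$; along the segment $q_t=(1-t)\cdot 1+t\,p\in\nneg{K}_d$ (convexity of $\nneg{K}_d$), the affine map $t\mapsto\riesz_\y(q_t)=(1-t)y_0+t\,\riesz_\y(p)$ is positive at $t=0$ and negative at $t=1$, hence vanishes at some $t^*\in(0,1)$, and $q_{t^*}$ — a convex combination of the interior point $1$ and the point $p\in\nneg{K}_d$ with positive weight on $1$ — lies in $\text{Int}(\nneg{K}_d)$. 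Thus $q_{t^*}\in L_\y\cap\text{Int}(\nneg{K}_d)$ and the program is strongly feasible. Together these give $A_1\Leftrightarrow A_2$, hence also $B_1\Leftrightarrow B_2$; note that by \Cref{lem:interior} the polynomial $q_{t^*}$ is strictly positive on $K$ and lies in the kernel of $\riesz_\y$, i.e.\ it is the announced unrepresentability certificate.

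For $A_1\Rightarrow A_3$: $\y$ has some representing measure supported on $K$, and by the atomic form of Tchakaloff's theorem — a Carath\'eodory-type bound on the number of atoms — it may be taken $s$-atomic, $\mu=\sum_{i=1}^s c_i\,\delta_{\spt_i}$ with $\spt_1,\dots,\spt_s\in K$ pairwise distinct, $c_i>0$ and $s\le m=\binom{n+d}{d}$. For any $p\in\nneg{K}_d$ one has $\riesz_\y(p)=\int_K p\,\ud\mu=\sum_{i=1}^s c_i\,p(\spt_i)$, a sum of nonnegative terms, so $\riesz_\y(p)=0$ if and only if $p(\spt_i)=0$ for every $i$; this is precisely the claimed identity $L_\y\cap\nneg{K}_d=\{p\in\nneg{K}_d:p(\spt_1)=\cdots=p(\spt_s)=0\}$. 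The implication $A_3\Rightarrow A_1$ is immediate, since $A_3$ exhibits a representing measure for $\y$ supported on $K$.

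The routine parts are the convexity/intermediate-value manipulation and the ``a sum of nonnegative numbers is zero iff each term is zero'' step; the two substantive inputs are the two forms of Tchakaloff's theorem (the duality form for $A_1\Leftrightarrow A_2$, the atomic form for $A_1\Rightarrow A_3$). I expect the main subtlety to be the step in $B_1\Rightarrow B_2$ that upgrades ``$\riesz_\y$ is somewhere negative on $\nneg{K}_d$'' to ``$\riesz_\y$ vanishes at an \emph{interior} point of $\nneg{K}_d$'': this is where compactness of $K$ (to place $1$ in the interior via \Cref{lem:interior}) and the hypothesis $y_0>0$ are both genuinely used, and where a careless argument would instead only produce a boundary point.
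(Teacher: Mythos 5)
Your proof is correct and follows essentially the same route as the paper's: Tchakaloff's duality theorem for $A_1\Leftrightarrow A_2$, the atomic (Carath\'eodory-type) representation for $A_1\Leftrightarrow A_3$ with the ``sum of nonnegative terms'' argument, and the observation that $0\in L_\y\cap\nneg{K}_d$ makes the program always feasible so that the $B$-clauses are formal negations of the $A$-clauses. The only difference is one of detail: where the paper invokes the supporting-hyperplane characterization of weak feasibility in one line, you spell out the underlying perturbation and interpolation arguments, correctly identifying that compactness of $K$ (placing $1$ in $\text{Int}(\nneg{K}_d)$ via \Cref{lem:interior}) and $y_0>0$ are exactly where the work is.
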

\begin{proof}
  The fact that $A_1$ and $B_1$ are strong alternatives is obvious, and the fact that the program
  $L_\y \cap \nneg{K}_d$ is always feasible (indeed, $L_\y$ is linear) implies that
  $A_2$ and $B_2$ are strong alternatives. Hence we only have to prove the equivalence of
  $A_1,A_2$ and $A_3$.

  {We first prove that $A_1$ is equivalent to $A_3$.}
  For $\spt \in K$, denote by $\lambda_\spt = (\spt^\ba)_{\ba \in \N^n_d} \in \mathscr{M}(K)_d$
  the sequence of moments of order $\leq d$ of the Dirac measure $\delta_{\spt}$.
  By \cite[Th.~17.2]{s17}, $\y$ admits a representing measure supported on $K$, if and only if it
  admits a representing atomic measure $\mu = \sum_{i=1}^s c_i \delta_{\spt_i}$, where
  $s \leq \dim \R[\x]_{\leq d} = \tbinom{n+d}{d}$ and for some $c_i >0$ and $\spt_1,\ldots,\spt_s
  \in K$. For every $p \in \R[\x]_{\leq d}$, one deduces
  $$
  \riesz_\y(p) = \int_K p \, d\mu = \sum_{i=1}^s c_i p(\spt_i)
  $$
  and thus $p \in L_\y$ if and only if $\sum_{i=1}^s c_i p(\spt_i) = 0$: then for
  $p \in \nneg{K}_d$, we conclude that $p$ must vanish on $\{\spt_1,\ldots,\spt_s\}$.
  {We deduce that $A_1$ and $A_3$ are equivalent}.

  {We prove now that $A_1$ and $A_2$ are equivalent.} By \Cref{tchakaloff},
  we know that $A_1$ holds if and only if $\riesz_\y$ is {non-negative} over
  $\nneg{K}_d$: this is the case if and only if the cone $\nneg{K}_d$ is contained in
  the closed half-space $L_\y^+ = \{p \in \R[\x]_{\leq d} : \riesz_\y(p) \geq 0\}$,
  and since $0 \in L_\y$, this is equivalent to $L_\y$ being a supporting hyperplane
  and the program being weakly feasible.
\end{proof}

When the conic program in \Cref{prop:feas} is weakly feasible, the set $L_\y \cap \nneg{K}_d$ is
an exposed and proper face of $\nneg{K}_d$ defined by vanishing on the finite set defined in
Item $A_3$. See also \cite{blekherman2015dimensional} and \cite[Sec.~4.4]{BPT}.
On the contrary, if the conic program is strongly feasible, we give the following definition.

\begin{definition}
  Let $\y \not\in \mathscr{M}(K)_d$. A polynomial $p \in L_\y \cap \text{Int}(\nneg{K}_d)$
  is called a \fdef{un\-re\-pre\-sen\-ta\-bi\-li\-ty certificate} for $\y$ in $K$.
\end{definition}

\Cref{cor:equiv_B} shows how to compute explicit un\-re\-pre\-sen\-ta\-bi\-li\-ty certificates
for \Cref{TMP}.

\begin{corollary}
  \label{cor:equiv_B}
  Assume that $Q(\g)$ is ar\-chi\-me\-de\-an and that conditions $B_1$-$B_2$ of \Cref{prop:feas} hold.
  There exists $p \in \text{Int}(\nneg{K}_d)$ such that
    $p \in 1+Q(\g)$, $\riesz_\y(p)=0$ and $p^*>1$ is arbitrarily large.
\end{corollary}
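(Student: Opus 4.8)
The plan is to produce the required polynomial $p$ by rescaling a strongly feasible point and then applying \Cref{cor:minusone}. By hypothesis $B_2$, the conic program $L_\y \cap \nneg{K}_d$ is strongly feasible, so there exists $q \in L_\y \cap \text{Int}(\nneg{K}_d)$. Since $Q(\g)$ is archimedean, $K = S(\g)$ is compact, hence by \Cref{lem:interior} the point $q$ is positive on $K$ and $q^* = \min_K q > 0$; moreover $\riesz_\y(q)=0$ because $q \in L_\y$.

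Next I would exploit that both $\text{Int}(\nneg{K}_d)$ and $L_\y$ are invariant under multiplication by positive scalars (the former is a cone, the latter a linear subspace). Thus, for any $\delta$ with $0 < \delta < q^*$, the polynomial $p := q/\delta$ still lies in $L_\y \cap \text{Int}(\nneg{K}_d)$, satisfies $\riesz_\y(p) = 0$, and has $p^* = q^*/\delta > 1$. Given a target $N$, choosing $\delta < q^*/N$ makes $p^* > N$, which accounts for the claim that $p^*$ can be taken arbitrarily large.

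Finally, to get the membership $p \in 1 + Q(\g)$, I would apply \Cref{cor:minusone} to the polynomial $f = q$ with the chosen $\delta$: it produces sums of squares $\sigma_0^\delta,\ldots,\sigma_k^\delta \in \Sigma_n$ with $p - 1 = q/\delta - 1 = \sigma_0^\delta + \sum_i \sigma_i^\delta g_i \in Q(\g)$. Collecting these facts gives $p \in \text{Int}(\nneg{K}_d)$, $p \in 1+Q(\g)$, $\riesz_\y(p)=0$ and $p^*>1$ arbitrarily large, as claimed.

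I do not expect a real obstacle: the statement follows essentially by combining strong feasibility, homogeneity of cones and linear spaces, and \Cref{cor:minusone}. The two points requiring a little care are (i) that "interior of $\nneg{K}_d$" coincides with "positive on $K$" only in the compact case, which is guaranteed here by archimedeanity via \Cref{lem:interior}, and (ii) that the SOS multipliers returned by \Cref{cor:minusone} depend on $\delta$, so the degree of the certificate for $p - 1 \in Q(\g)$ is not uniform as $p^* \to \infty$; this is already flagged in the remark after \Cref{cor:minusone} and motivates introducing a separate unrepresentability-degree invariant afterwards.
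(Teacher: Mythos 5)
Your proposal is correct and follows essentially the same route as the paper's proof: take a strongly feasible point $f \in L_\y \cap \text{Int}(\nneg{K}_d)$ guaranteed by $B_2$, rescale by $1/\delta$ with $0<\delta<f^*$ to make $p^*$ arbitrarily large while preserving $\riesz_\y(p)=0$, and invoke \Cref{cor:minusone} for the membership $p \in 1+Q(\g)$. The two caveats you flag (compactness via archimedeanity for \Cref{lem:interior}, and the $\delta$-dependence of the SOS multipliers' degrees) match the paper's surrounding remarks.
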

\begin{proof}
  Property $B_2$ of \Cref{prop:feas} ensures that there exists $f \in
  L_\y \cap \text{Int}(\nneg{K}_d)$, that is $f$ is positive over $K$ and
  $\riesz_\y(f)=0$.
  Let $0 < \delta < f^*$, and let $p = \frac{1}{\delta}f
  \in \text{Int}(\nneg{K}_d)$. From \Cref{cor:minusone} we get that $p-1 \in
  Q(\g)$, that is, $p \in 1+Q(\g)$. Moreover $\riesz_\y(p) =
  \frac{1}{\delta}\riesz_\y(f) = 0$, and $p^* = {f^*}/{\delta}>1$ is arbitrarily
  large.
\end{proof}

\begin{remark}
  If $\y \in \Q^m$, then $p$ in \Cref{cor:equiv_B} can be chosen with rational coefficients.
  Indeed, the hyperplane $L_\y$ is defined by an equation with rational
  coefficients, and $L_\y \cap \text{Int}(\nneg{K}_d)$ is a non-empty open subset of
  $L_\y$, hence it contains a rational point $f$. Choosing $\delta \in \Q$ in the proof of
  \Cref{cor:equiv_B} is thus sufficient to get a rational certificate.

  Nevertheless, let us recall from \cite{scheiderer2016sums}
  that there exist polynomials in $\Q[\x]$, that are sums of squares as elements of $\R[\x]$ but not
  as elements of $\Q[\x]$. In our context, this means that the rational unrepresentability
  certificate $p$ might not admit rational certificates of positivity showing that $p \in 1+Q(\g)$
  (see also \cite{naldi2021conic} for the existence of rational certificates in conic programming).
\end{remark}

  Any polynomial $p$ as in \Cref{cor:equiv_B} is such that
  $p-1 \in Q(\g)$. In particular, there exists $D>0$ such that $p-1 \in Q(\g)[D]$, that is
  $p \in 1+Q(\g)[D]$. Bounds for $D$ are given, {\it e.g.}, in \cite[Th.~1.7]{baldi2022effective}.
  Below we give a bound on $D$ as a function of the input of \Cref{algo:naif}, see
  \Cref{unrepr_deg} and \Cref{bound_unrepr_deg}.


The following example shows that the certificate of \Cref{cor:equiv_B}
might exist in non-archimedean contexts.

\begin{example}
  \label{non_archimedean_certificate}
  The vector $\y = (1,1,0)$ is not a univariate moment vector (indeed the moment matrix
  $M_1(\y) = \left(\begin{smallmatrix} 1 & 1 \\ 1 & 0\end{smallmatrix}\right)$ is not
    positive semidefinite). Remark that
  $\R = S(0)$ and $Q(0) = \Sigma_n$ is not archimedean, but an unrepresentability certificate
  in the spirit of \Cref{cor:equiv_B} exists. Indeed $\riesz_\y(p_0+p_1x+p_2x^2) = p_0+p_1$ and
  the following identity holds for all $p \in L_\y$:
  $$
  p_0-p_0x+p_2x^2 =
  1+
  \begin{pmatrix}
    1 & x
  \end{pmatrix}
  \begin{pmatrix}
    p_0-1 & -\frac{p_0}{2} \\
    -\frac{p_0}{2} & p_2
  \end{pmatrix}
  \begin{pmatrix}
    1 \\ x
  \end{pmatrix}.
  $$
  The identity is a SOS-certificate for $p \in 1+Q(\g)$ if and only if the
  Gram matrix on the right hand side is positive semidefinite. This yields a spectrahedral
  representation of the set of unrepresentability certificates of $\y$:
  $$
  \left\{
  p \in \R[\x]_{\leq 2} :
  p_1=-p_0, 
  \left(\begin{smallmatrix}
    p_0-1 & -\frac{p_0}{2} \\
    -\frac{p_0}{2} & p_2
  \end{smallmatrix}
  \right) \succeq 0
  \right\}
  $$
  For instance the polynomial $p = 2-2x+x^2 = 1+(1-x)^2 \in (1+Q(0)) \cap L_\y$.

  We remark that the existence of such $p$ implies $\y \not\in \mathscr{M}(\R)_{2}$.
  Indeed, if $\y \in \mathscr{M}(\R)_{2}$,
  then by $A_3$ of \Cref{prop:feas}, there would exist a representing measure with finite
  support, thus one would have $\y \in \mathscr{M}([-R,R])_{2}$ for some $R>0$.
  Now, since $1 + Q(0) \subset 1+Q(R^2-x^2)$ for every $R>0$, we deduce that $p \in 1+Q(R^2-x^2)$,
  thus $p \in \text{Int}(\nneg{[-R,R]}_2)$ (according to \Cref{lem:interior}) and thus
  $\y \not\in \mathscr{M}([-R,R])_{2}$, for every $R>0$ (by $B_2$, \Cref{prop:feas}).
  \hfill$\Box$
\end{example}

The polynomial $p$ in \Cref{non_archimedean_certificate}, together with its positivity
certificate $1+(1-x)^2$, allows to check rigourously the un\-re\-pre\-sen\-ta\-bi\-li\-ty
of $\y=(1,1,0)$. Nevertheless, in general the archimedianity hypothesis cannot be
dropped, as shown in \Cref{absence_certificate_non_archimedean}.

\begin{example}
  \label{absence_certificate_non_archimedean}
  Let $\y = (1,0,0,0,1)$ be the vector of \Cref{example_not_closed}, with $\g=0$.
  The semidefinite program in \Cref{cor:equiv_B} is
  infeasible: indeed $Q(0)=\Sigma_{n}$ and it is easy to check
  that there is no polynomial $p = \sum_{i=0}^4p_ix^i$ satisfying
  \begin{align*}
    \riesz_\y(p) = p_0 + p_4 & = 0 \\
    p & = 1 +
    \begin{pmatrix}
      1 & x & x^2
    \end{pmatrix}
    X
    \begin{pmatrix}
      1 & x & x^2
    \end{pmatrix}^T \\
    X & \succeq 0.
  \end{align*}
  Indeed, the constraints imply that $p_4 = -p_0 = -(1+X_{11}) < 0$, thus $p$ is
  negative at infinity, in particular, $p\not\in\nneg{\R}$.

  Nevertheless for every
  $R>0$, with $\g=(R-x,R+x)$, \Cref{cor:equiv_B} ensures that there exists
  $p_R \in 1+Q(R-x,R+x)$
  such that $\riesz_\y(p_R)=0$, that certifies that $\y \not\in \mathscr{M}([-R,R])_4$:
  the polynomial $p_R$ is any solution of the following parametric linear matrix inequality
  with $\sigma_0,\sigma_1,\sigma_2 \in \Sigma_{1}$:
  $$
  p_0+p_1x+p_2x^2+p_3x^3-p_0x^4 = 1+\sigma_0+\sigma_1(R-x) + \sigma_2(R+x).
  $$
  The fact that $\g=(R-x,R+x)$ is the natural description of $[-R,R]$ (see \cite[Sec.~2.7]{marshall2008positive})
  implies that $Q(\g)$ is stable, with stability function $D(d)=d$ (see {\it e.g.} \cite[Prop.~3.3]{s17}), and hence one can
  assume $\sigma_0 \in \Sigma_{1,4}$ and $\sigma_1,\sigma_2 \in \Sigma_{1,2}$.
  \hfill$\Box$
\end{example}

We terminate this series of examples with a bivariate one.

\begin{example}
  \label{example_didier}
  Let $n=2,d=6$ and let $\y = (y_{\ba})_{\ba \in \N^2_6}$ be the vector in $\R^{28}$ whose non-zero
  entries are
  \begin{align*}
    y_{00} & = 32          & y_{22} & = 30 \\
    y_{20} & = y_{02} = 34 & y_{60} & = y_{06} = 128 \\
    y_{40} & = y_{04} = 43 & y_{42} & = y_{24} = 28.
  \end{align*}
  We claim that there is no nonnegative Borel measure supported on the unit ball
  $K = \{\pt=(a_1,a_2) \in \R^2 : a_1^2+a_2^2 \leq 1\}$ whose moments up to degree $6$ agree with
  $\y$. Remark that the semialgebraic set $K = S(1-x_1^2-x_2^2)$ is compact and $Q(1-x_1^2-x_2^2)$
  is archimedean, in particular it is not stable. We give below in \Cref{example_didier_suite}
  an unrepresentability certificate of small degree certifying that $\y \not\in \mathscr{M}(K)_d$,
  proving our claim.
  \hfill$\Box$
\end{example}


\subsection{SDP-based algorithm}
\label{ssec:naif_algo}

The results of \Cref{ssec:conic} yield the following alternatives
for \Cref{TMP}. Given $\y = (\y_\ba)_{\ba \in \N^n_d}$ and $K=S(\g)$, according to \Cref{prop:feas}:
\begin{itemize}
\item
  either $\y \not\in \mathscr{M}(K)_d$, in which case a certificate of un\-re\-pre\-sen\-ta\-bi\-li\-ty
  is given by a polynomial $p \in \text{Int}(\nneg{K}_d)$ such that $\riesz_\y(p)=0$ (\Cref{cor:equiv_B})
\item
  or $\y \in \mathscr{M}(K)_d$, in which case there exists an atomic measure $\mu = \sum_{i=1}^s c_i
  \delta_{\spt_i}$ representing $\y$ (Property $A_3$ in \Cref{prop:feas}).
\end{itemize}

We describe our main algorithm. \\

\fbox{
  \parbox[center]{\textwidth}{
\begin{algorithm}
  \algoname{certify\_moment}
  \label{algo:naif}
  \begin{algorithmic}[1]
    \Require{
      \Statex \textbullet~ $n,d \in \N$
      \Statex \textbullet~ A vector $\y \in \R^m$, with $m=\tbinom{n+d}{d}$
      \Statex \textbullet~ Polynomials $\g = (g_1,\ldots,g_k) \in \R[\x]^k$
      \Statex \textbullet~ A threshold $D \in \N$
     }
    \Ensure{
      \Statex \textbullet~ Either $(p,\Sigma)$ where $p \in \R[\x]_{\leq d}$ satisfies \Cref{cor:equiv_B},
      and $\Sigma \in \Sigma_n^{k+1}$ is a certificate for $p \in 1+Q(\g)[D]$
      \Statex \textbullet~ or a measure $\mu = \sum c_i \delta_{\spt_i}$ satisfying $A_3$ in \Cref{prop:feas}
    }
    \State $(p,\Sigma) \gets {\bf find\_certificate}(n,d,\y,\g,D)$ \label{step_sdp}
    \InlineIf{$p \neq []$}{\Return $(p,\Sigma)$} \label{if_return}
    \State \Return ${\bf find\_measure}(n,d,\y,\g)$ \label{step:extract:meas}
  \end{algorithmic}
\end{algorithm}
}}

\vspace{0.3cm}

\Cref{algo:naif} depends on two subroutines. The first one, {\bf fi\-nd\_cer\-ti\-fi\-ca\-te} at Step \ref{step_sdp},
returns, if it exists, an un\-re\-pre\-sen\-ta\-bi\-li\-ty certificate $p \in \R[\x]_{\leq d}$ for $\y$, together with
a SOS-certificate for $p$ as element of $1+Q(\g)[D]$.

\vspace{0.3cm}

\fbox{
  \parbox[left]{\textwidth}{
\begin{algorithm}
  \algoname{find\_certificate}
  \label{algo:find:certificate}
  \begin{algorithmic}[1]
    \State $p \gets []$, $\Sigma \gets []$, $g_0 \gets 1$
    \State Find $p \in \R[\x]_{\leq d}$ and $\sigma_0,\sigma_1,\ldots,\sigma_k \in \Sigma_n$ such that
    \begin{itemize}
    \item
      $\riesz_\y(p)=0$
    \item
      $p = 1+\sum_{i=0}^k \sigma_i g_i$, $\deg(\sigma_i g_i) \leq D$ \label{algo:find:certificate:sdp}
    \end{itemize}
    \State $\Sigma \gets [\sigma_0, \sigma_1, \ldots, \sigma_k]$
    \State \Return $(p, \Sigma)$
  \end{algorithmic}
\end{algorithm}
}}

\vspace{0.3cm}

\Cref{algo:find:certificate} can be performed by solving one (finite-dimensional)
SDP feasibility program whose unknowns are $p, \sigma_0, \allowbreak\sigma_1, \ldots, \sigma_k$.
First, the constraint $\riesz_\y(p)=0$ is linear in $p$. Next, denoting
$\delta_i=\lfloor (D-\deg(g_i))/2 \rfloor$, the constraints $\sigma_i \in \Sigma_n$ and
$\deg(\sigma_i g_i) \leq D$ are equivalent to the existence of a symmetric matrix $X_i \succeq 0$,
of size $\binom{\delta_i+n}{n}$, such that $\sigma_i = v^T X_i v$, where $v$ is a linear basis of
$\R[\x]_{\delta_i}$.
Finally the constraint $p=1+Q(\g)[D]$ in Step \ref{algo:find:certificate:sdp} is affine linear in $p$
and in the entries of $X_0, X_1, \ldots, X_k$.
We give upper bounds for the value of $D$ in \Cref{bound_unrepr_deg}.

The second routine, {\bf find\_measure}, is called if and only if
\Cref{algo:naif} reaches Step \ref{step:extract:meas}. It returns a $s$-atomic
measure representing the vector $\y$, for some $s \leq \binom{n+d}{d}$.

\vspace{0.3cm}

\fbox{
  \parbox[left]{\textwidth}{
\begin{algorithm}
  \algoname{find\_measure}
  \label{algo:extract:measure}
  \begin{algorithmic}[1]
    \State $s=1$
    \While{$s \leq \binom{n+d}{d}$}
    \State Find $\c \in \R^s$ and $\bm{U} = (\spt_1,\ldots,\spt_s) \in (\R^n)^s$ s.t.
    \begin{itemize}
      \item $\spt_1,\ldots,\spt_s \in S(\g)$
      \item $c_1\spt_1^\ba+\cdots+c_s\spt_s^\ba = y_{\ba}$ for all $\ba \in \N^n_d$
    \end{itemize}
    \InlineIf{solution exists}{\Return $(\c,\bm{U})$}
    \State $s \gets s+1$
    \EndWhile
  \end{algorithmic}
\end{algorithm}
}}

\vspace{0.3cm}

The routine {\bf find\_measure} can be performed by existing algorithms computing one point per
connected component of basic semialgebraic sets applied to the set of elements $(\c,\bm{U}) \in \R^s \times (\R^n)^s$
satisfying the inequalities defining $K$ and the polynomial equations $\sum_i c_i \spt_i^\ba = y_\ba$,
$\ba \in \N^n_d$. The equations have a multivariate Vandermonde structure. A precise complexity analysis
of {\bf find\_measure} is left to future work.

We define now an integer function of the input of \Cref{TMP}.

\begin{definition}
  \label{unrepr_deg}
  Let $K=S(\g)$ and $\y \not\in \mathscr{M}(K)_d$. The \emph{unrepresentability degree} of $\y$ in $K$
  is the minimum integer $D=D(n,d,\y,\g)$ such that there exists $p \in 1+Q(\g)[D]$ satisfying $\riesz_\y(p)=0$.
\end{definition}

The unrepresentability degree is well defined, according to \Cref{cor:equiv_B}.
We prove the correctness of \Cref{algo:naif}.

\begin{theorem}[Correctness]
  \label{correctness}
  Let $\y = (y_\ba)_{\ba \in \N^n_d}$, and let $\g=(g_1,\ldots,g_k) \in \R[\x]^k$ be such that
  $Q(\g)$ is archimedean.
  There exists $D = D(n,d,\y,\g) \in \N$ such that \Cref{algo:naif} with input
  $(n,d,\y,\g,D)$ terminates and is correct.
\end{theorem}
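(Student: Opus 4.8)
The plan is to establish correctness by separating the two logical cases that arise from \Cref{prop:feas}, and using the archimedeanity hypothesis to guarantee that the chosen threshold $D$ is large enough for the SDP in \Cref{algo:find:certificate} to succeed whenever $\y \not\in \mathscr{M}(K)_d$. First I would fix the value of $D$: by \Cref{cor:equiv_B}, when $B_1$-$B_2$ hold there exists $p \in 1+Q(\g)$ with $\riesz_\y(p)=0$, hence by definition of the quadratic module there is a finite $D_0$ with $p \in 1+Q(\g)[D_0]$; this is exactly the unrepresentability degree of \Cref{unrepr_deg}, which is well defined precisely because of \Cref{cor:equiv_B}. One then sets $D := D(n,d,\y,\g)$ (or any larger integer). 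Termination is immediate once $D$ is fixed: \Cref{algo:find:certificate} solves a single finite-dimensional SDP feasibility problem whose matrix sizes are determined by $D$, $n$ and the degrees of $\g$, and \Cref{algo:extract:measure} runs a bounded $\mathtt{while}$ loop over $s = 1, \ldots, \binom{n+d}{d}$, each iteration being one call to a point-finding routine on a semialgebraic set described by finitely many polynomials; so \Cref{algo:naif} performs at most two subroutine calls and halts.

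For correctness I would split on whether $\y \in \mathscr{M}(K)_d$. If $\y \not\in \mathscr{M}(K)_d$, then $B_1$ holds, and since $Q(\g)$ is archimedean, \Cref{cor:equiv_B} produces a polynomial $p \in 1+Q(\g)$ vanishing under $\riesz_\y$; by the choice of $D$ we have $p \in 1+Q(\g)[D]$, so the SDP in \Cref{algo:find:certificate} is feasible, $\mathtt{find\_certificate}$ returns a genuine pair $(p,\Sigma)$ with $p \neq []$, and \Cref{algo:naif} returns it at Step~\ref{if_return}. Conversely one must check soundness: if $\mathtt{find\_certificate}$ returns a non-empty $p$, then by construction $\riesz_\y(p)=0$ and $p = 1 + \sum_{i=0}^k \sigma_i g_i \in 1+Q(\g)$, so $p-1 \in Q(\g) \subset \nneg{K}_d$ and thus $p \geq 1$ on $K$, in particular $p^* \geq 1 > 0$; by \Cref{lem:interior} (using compactness of $K$, which follows from archimedeanity) $p \in \text{Int}(\nneg{K}_d)$, so $p \in L_\y \cap \text{Int}(\nneg{K}_d)$ and by $B_2$ of \Cref{prop:feas} the vector $\y$ is indeed not moment-representable — the certificate is valid.

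In the remaining case $\y \in \mathscr{M}(K)_d$: by the soundness argument just given, $\mathtt{find\_certificate}$ cannot return a non-empty $p$ (such a $p$ would force $\y \not\in \mathscr{M}(K)_d$), so \Cref{algo:naif} reaches Step~\ref{step:extract:meas} and calls $\mathtt{find\_measure}$. By the equivalence $A_1 \Leftrightarrow A_3$ of \Cref{prop:feas} (itself resting on \cite[Th.~17.2]{s17}), $\y$ admits an atomic representing measure $\mu = \sum_{i=1}^s c_i \delta_{\spt_i}$ with $c_i > 0$, $\spt_i \in S(\g)$ and $s \leq \binom{n+d}{d}$; hence for that value of $s$ the tuple $(\c,\bm{U})$ satisfies the constraints in the loop body, the point-finding routine returns such a solution, and \Cref{algo:naif} outputs a measure fulfilling $A_3$.

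The main obstacle is conceptual rather than computational: it lies in pinning down that a \emph{finite} $D$ exists and that the algorithm is handed one that works. This is entirely supplied by archimedeanity — via Putinar's Positivstellensatz (\Cref{putinar}) and \Cref{cor:equiv_B} — which is why the hypothesis on $Q(\g)$ is essential and cannot be relaxed (\Cref{absence_certificate_non_archimedean}). One should also take minor care that the SDP of \Cref{algo:find:certificate} is genuinely finite-dimensional for fixed $D$ (the SOS multipliers $\sigma_i$ range over $\Sigma_{n, D - \deg g_i}$, a spectrahedron of bounded size), and that $\mathtt{find\_measure}$'s point-finding step is a legitimate primitive on the Vandermonde-type system; both are routine given the discussion preceding the theorem. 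A quantitative value for $D$ in terms of the input is deferred to \Cref{bound_unrepr_deg}.
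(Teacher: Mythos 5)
Your proposal is correct and follows essentially the same route as the paper's own proof: take $D$ to be the unrepresentability degree when $\y\not\in\mathscr{M}(K)_d$, use \Cref{cor:equiv_B} for completeness of the SDP step, argue soundness via $p-1\in Q(\g)$ forcing $p^*\geq 1$ and hence $p\in\text{Int}(\nneg{K}_d)\cap L_\y$ so that $B_2$ of \Cref{prop:feas} applies, and invoke $A_1\Leftrightarrow A_3$ for the \algoname{find\_measure} branch. The only additions are your explicit (and welcome) remarks on termination and on the finite-dimensionality of the SDP, which the paper leaves implicit.
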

\begin{proof}
  Let $K=S(\g)$. Since $Q(\g)$ is archimedean, $K$ is compact and by \Cref{lem:interior},
  $\text{Int}(\nneg{K}_d) = \{p \in \R[\x]_{\leq d} : p(\pt) > 0, \,\forall\,\pt\in K\}$.
  By \Cref{putinar}, if a polynomial is positive on $K$, then it belongs to $Q(\g)$.
  We deduce that $\text{Int}(\nneg{K}_d) \subset Q(\g) \cap \R[\x]_{\leq d} \subset
  \nneg{K}_d$.

  We claim that for $D \in \N$ large enough, then $\y \not\in \mathscr{M}(K)_d$
  if and only if {\bf certify\_moment} returns a polynomial $p$ at Step \ref{if_return}, that is, if and
  only if the semidefinite program at Step \ref{step_sdp} is feasible.

  Assume $\y \not\in \mathscr{M}(K)_d$ and let $D$ the unrepresentability degree of
  $\y$ in $K$.
  By \Cref{cor:equiv_B}, 
  there exists a polynomial $p \in 1+Q(\g)[D] \subset \text{Int}(\nneg{K}_d)$, such that
  $\riesz_\y(p)=0$. In other words, {\bf find\_certificate} returns $(p,\Sigma)$ with $p \neq []$,
  and hence \Cref{algo:naif} returns its output at Step \ref{if_return}.

  For the reverse implication, suppose that the semidefinite program at Step \ref{step_sdp} is feasible
  for some degree $D$. Let $p \in \R[\x]_{\leq d}$ be a solution of such program. Then $p = 1+q$
  for some $q \in Q(\g)[D]$, in particular, $p^* \geq 1$ on $K$, that is, $p$ is positive on $K$.
  Since $\riesz_\y(p)=0$, and by compactness of $K$, one has $p \in \text{Int}(\nneg{K}) \cap L_\y$
  and again applying \Cref{prop:feas} one concludes $\y\not\in\mathscr{M}(K)_d$.
  This proves the claim.

  Finally, remark that this also shows that $\y \in \mathscr{M}(K)_d$ if and only if {\bf certify\_moment}
  reaches Step \ref{step:extract:meas}. If this is the case, {\bf find\_measure} computes the support
  and the weights of an $s$-atomic measure representing $\y$: such measure exists for some $s \leq \binom{n+d}{d}$,
  according to \Cref{prop:feas}.
\end{proof}

\section{Bound on the unrepresentability degree}
\label{bound_unrepr_deg}
{\it A priori} bounds on the degree of Putinar certificates that only depend on the
degree of the polynomial exist for stable quadratic modules. Nevertheless,
stability and ar\-chi\-me\-de\-ani\-ty properties are mutually exclusive in
dimension $n \geq 2$. 

In this section we give general bounds for the
unrepresentability degree of a vector $\y \not\in \mathscr{M}({K})_d$ in a
compact basic semialgebraic set $K = S(\g)$. A key ingredient to do this is the
use of already existing quantitative analysis of computer algebra algorithms
performing quantifier elimination over the reals.

We first recall the following bound on the degree of a Putinar representation of
a polynomial in $\text{Int}(\nneg{K}_d)$, for an archimedean quadratic module
$Q(\g)$, given in \cite{baldi2022effective}. Let $f \in \text{Int}(\nneg{K}_d)$.
We denote by $\epsilon(f) := {f^*}/{\|f\|}$ where
$$
f^*=\min_{\pt \in K} f(\pt)
\,\,\,\,\,\,\,\,\,
\text{  and  }
\,\,\,\,\,\,\,\,\,
\|f\| = \max_{\pt \in [-1,1]^{n}}f(\pt).
$$
Under the following assumptions:
\begin{itemize}
\item
  $1-\sum_i x_i^2 \in Q(\g)$
\item
  $\|g_i\| \leq \frac12$ for all $i=1,\ldots,k$
\end{itemize}
then by \cite[Th.~1.7]{baldi2022effective} there exists a function $\gamma = \gamma(n,\g)$
such that $f \in Q(\g)[D]$ for $D$ of the order of
\begin{equation}
  \label{boundBM}
\gamma(n,\g) \, d^{3.5 \L n} \, \epsilon(f)^{-2.5 \L n}
\end{equation}
where $\mathfrak{c}$ and $\L$ are the $\L$ojasiewicz coefficients as they are
defined in~\cite[Def.~2.4]{baldi2022effective}. \\

Let now $(n,d,\y,\g)$ be the input of \Cref{algo:naif}. We assume in the whole
section that $\y \in \Q^N$, with $N =\binom{n+d}{d}$, that the basic semialgebraic set $K =
S(\g) \subset \R^{n}$ is defined by polynomial inequalities $\g=(g_1,\ldots,g_k)
\subset \Q[\x]^k$ and that $Q(\g)$ is archimedean.

Assume $\y \not\in \mathscr{M}(K)_d$, and let $p \in 1+Q(\g)$ satisfy
$\riesz_\y(p)=0$ and $p^* = 1+m$ for some arbitrary constant $m>0$.
Such unrepresentability certificate exists by
\Cref{cor:equiv_B}. From \eqref{boundBM}, one has that $p \in 1+Q(\g)[D]$ with
$D$ depending on
$$
\epsilon(p-1) = \frac{p^*-1}{\|p-1\|} \geq \frac{1}{1+\|p\|}
$$
where the last inequality derives from \Cref{cor:equiv_B} choosing without loss
of generality $m=1$.
In the following, we provide an upper bound $B$ on $\|p\|$: this will yield a lower
bound on $\epsilon(p-1)$, hence an upper bound on $D$.

As already said, in order to do that, we consider the formulation in terms of
quantifier elimination over the reals that solves the truncated moment problem
in the sense of \Cref{prop:feas} and \Cref{cor:equiv_B}, and we use quantitative results
on quantifier elimination over the reals from \cite{BPR}.

Consider the following formula with quantified variables $\x=(x_1, \allowbreak\ldots, x_n)$ and
parameters the unknown coefficients $\{p_\ba : \ba \in \N^n_d\}$ of a polynomial $p \in \R^N$:
\begin{equation}
  \label{eq:qe}
  \riesz_\y(p) = 0
  \,\,\, \wedge \,\,\,
  \left(\forall \x \in \R^{n} \quad \bigwedge_{i=1}^k g_i(\x)\geq 0 \Rightarrow p(\x) > 1 \right).
\end{equation}

\begin{lemma}
  \label{lem:formula4}
  Let $S \subset \R^N$ be the semialgebraic set defined by the quantifier-free formula obtained from
  \eqref{eq:qe} after eliminating the quantified variables. Then $S$ is an open subset of $L_\y$ with
  respect to the induced topology.
\end{lemma}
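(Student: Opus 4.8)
The plan is to unwind the two conjuncts of \eqref{eq:qe} separately. The first conjunct, $\riesz_\y(p)=0$, is a single linear equation in the parameters $p = (p_\ba)_{\ba\in\N^n_d}$; its solution set is exactly the hyperplane $L_\y \subset \R^N$ (here $\R^N$ is identified with $\R[\x]_{\le d}$). This conjunct does not involve the quantified variables $\x$ at all, so after quantifier elimination it is carried over unchanged, and hence $S \subset L_\y$. It remains to show that $S$ is open in $L_\y$, equivalently that the second conjunct of \eqref{eq:qe} carves out an open subset of $L_\y$.

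For the second conjunct, first I would rewrite the quantified subformula $\big(\forall \x\in\R^n:\ \bigwedge_{i=1}^k g_i(\x)\ge 0 \Rightarrow p(\x) > 1\big)$ as the assertion that the polynomial $p-1$ is strictly positive on $K = S(\g)$, i.e. $(p-1)^* > 0$ with the notation $f^* = \inf_{\x\in K} f(\x)$ of \Cref{ssec:interior}. Since $Q(\g)$ is archimedean (this is the standing assumption of the section), $K$ is compact, so by \Cref{lem:interior} this is equivalent to $p-1 \in \text{Int}(\nneg{K}_d)$, hence to $p \in 1 + \text{Int}(\nneg{K}_d)$. Thus the semialgebraic set $T := \{p \in \R^N : p-1 \in \text{Int}(\nneg{K}_d)\}$ is exactly the set described by the quantifier-free formula equivalent to the second conjunct, and $S = L_\y \cap T$. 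Because $\text{Int}(\nneg{K}_d)$ is open in $\R[\x]_{\le d} \cong \R^N$, the translate $T = 1 + \text{Int}(\nneg{K}_d)$ is open in $\R^N$, so its trace $S = L_\y \cap T$ on the affine subspace $L_\y$ is open in the induced topology.

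The only point requiring care — and the main obstacle — is the interplay between quantifier elimination and the geometric description: one must make sure that the quantifier-free formula produced by eliminating $\x$ genuinely defines the same set $S$ as \eqref{eq:qe}, and that ``$\forall\x$'' over all of $\R^n$ together with the implication is correctly read as strict positivity of $p-1$ on the (closed) set $K$. This is immediate once one observes that, for fixed parameters $p$, the inner formula is true precisely when no point of $K$ satisfies $p(\x)\le 1$, i.e. when $\min_{\x\in K}(p(\x)-1) > 0$ (using compactness of $K$ so the infimum is attained), which is the defining condition of $T$. Finally, since $S = L_\y \cap T$ is a nonempty open subset of $L_\y$ whenever $\y\not\in\mathscr{M}(K)_d$ — nonemptiness being exactly the content of \Cref{cor:equiv_B}, via property $B_2$ of \Cref{prop:feas} — we obtain the claimed conclusion that $S$ is an open subset of $L_\y$.
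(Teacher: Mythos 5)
Your proof is correct and follows essentially the same route as the paper: both identify $S$ as $L_\y \cap \bigl(1+\text{Int}(\nneg{K}_d)\bigr)$ via compactness of $K$ and \Cref{lem:interior}, and then conclude openness in the induced topology. Your final step (an open subset of $\R^N$ intersected with $L_\y$ is open in $L_\y$) is in fact slightly more direct than the paper's translation argument via $-1+L_\y$, but the substance is identical.
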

\begin{proof}
  For $p \in \R[\x]_{\leq d}$ and $A \subset \R[\x]_{\leq d}$, we denote by $p+A:=\{p+q : q \in A\}$.
  Since $Q(\g)$ is archimedean, $K$ is compact and according to \Cref{lem:interior}, $\text{Int}(\nneg{K}_d)$
  consists exactly of polynomials that are positive on $K$. Thus $S = L_\y \cap (1+\text{Int}(\nneg{K}_d))$.
  Moreover by \Cref{prop:feas}, we know that $L_\y \cap \text{Int}(\nneg{K}_d)$ is non-empty and open in $L_\y$.
  Now since $L_\y$ is a linear space intersecting $\text{Int}(\nneg{K}_d)$, which is an open convex cone, so
  does the affine space $-1+L_\y$. Thus $T = (-1+L_\y) \cap \text{Int}(\nneg{K}_d)$ is open in $-1+L_\y$ and hence
  $S = 1+T$ is open in $1+(-1+L_\y)=L_\y$.
\end{proof}

Observe that the constraint $\riesz_\y(p)=0$ in \eqref{eq:qe} is linear in $p$ and $\y \neq 0$.
Thus one of the coefficients of $p$, say $p_{\ba'}$, can be eliminated, yielding
an formulation of the quantifier elimination problem \eqref{eq:qe}:
\begin{equation}
  \label{eq:qe2}
  \forall \x \in \R^{n} \quad \bigwedge_{i=1}^k g_i(\x)\geq 0 \Rightarrow \tilde{p}(\x)>1
\end{equation}
where $\tilde{p}$ is the polynomial obtained when substituting $p_{\ba'}$ in $p$
by a linear form in the other coefficients $p_\ba$ using $\riesz_\y(p) = 0$.
Below we abuse of notation and consider the set $S$ defined in \Cref{lem:formula4} and by the
previous formulae as embedded in $L_\y$ identified with $\R^{N-1}$.
We conclude that the set $S$ has non-empty interior in $\R^{N-1}$.

We denote by
$$
\gdeg = \max_i \big\{\deg(g_i), i=1,\ldots,k\big\}
$$
and by $\tau_\g$ the maximum bit size of the coefficients of the $g_i$'s. Note that we can
multiply in~\eqref{eq:qe2} the polynomials $g_i$ by the (positive) least commun
multiple of the denominators of their coefficients to obtain equivalent
inequalities but with coefficients in $\Z$. These least common multiples have
height bounded by $(\binom{n+d_\g}{n} + 1)\tau_\g$.

Further, we denote by $\tau_\y$ the maximum bit size of the coefficients of
$\y$. As above, the equation $\riesz_\y(p) = 0$ can be rewritten with
coefficients in $\Z$ of bit size bounded by $(N+1)\tau_\y$. We set
\begin{equation}
  \label{tau}
\tau = \max\left((N+1)\tau_\y, \left(\binom{n+d_\g}{n} \right) \tau_\g\right).
\end{equation}
Note that $\tau$ is a bound on the integer coefficients of
the polynomial constraints in \eqref{eq:qe2} once we have multiplied each of
them by the least common multiple of the denominators of their coefficients.

Finally, let $\delta = \max(d + 1, d_\g)$. Note that $\delta$ dominates the
maximum degree of the polynomial constraints in~\eqref{eq:qe2}, indeed, the polynomial
$\tilde{p} \in \R[\x,p_\ba : \ba \in \N^n_d]$ has degree $d$ in $\x$ and has degree
$1$ with respect to its unknown coefficients.



\begin{proposition}
  \label{prop:bound_eps}
  There exists $\tilde{p} \in {S}$ with $\|\tilde{p}\| \leq B$ for $B$ in
  $$
  \tau^{O(1)} \left(k(\delta+1)\right)^{O(n(N-1))}.
  $$
\end{proposition}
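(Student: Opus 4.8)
The plan is to extract the bound on $\|\tilde p\|$ from the semialgebraic description of the set $S$ produced by quantifier elimination, using the quantitative sampling results of \cite{BPR}. By \Cref{lem:formula4} and the subsequent discussion, $S$ is a nonempty open subset of $\R^{N-1}$ cut out by a quantifier-free formula equivalent to \eqref{eq:qe2}. The key observation is that eliminating the single block of $n$ quantified variables $\x$ from a formula whose atoms are the $k+1$ polynomial inequalities $g_1\ge 0,\dots,g_k\ge 0,\ \tilde p>1$ — these having degree at most $\delta$ in $(\x,\bm p)$ and integer coefficients of bit size at most $\tau$ — yields, via the quantifier elimination algorithm of \cite[Ch.~14]{BPR}, a quantifier-free formula in the $N-1$ free parameters whose polynomials have degree bounded by $(k(\delta+1))^{O(n)}$ (more precisely $(k\delta)^{O(n)}$ after accounting for the number of blocks) and bit sizes bounded by $\tau (k\delta)^{O(n)}$. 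I would quote the relevant estimate directly from \cite{BPR} for the case of a single quantified block of $n$ variables over $N-1$ parameters.

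\textbf{Main steps.}
First I would set up \eqref{eq:qe2} as a prenex formula with one quantifier block of size $n$ and $N-1$ free variables $\{p_\ba\}$, observing (as already done in the text) that after clearing denominators all atoms have integer coefficients of bit size at most $\tau$ and total degree at most $\delta$, and that the number of atoms is $k+1$. Second, I would invoke the quantifier elimination bounds of \cite{BPR}: the output quantifier-free formula describing $S$ involves at most $(k(\delta+1))^{O(n(N-1))}$ polynomials, each of degree at most $(k(\delta+1))^{O(n)}$ and bit size at most $\tau (k(\delta+1))^{O(n)}$. Third — this is the crux — since $S$ has nonempty interior in $\R^{N-1}$, it contains a ball, and I would apply the quantitative existential theory / sampling bound (one point per connected component, e.g. \cite[Algorithm 13.2 and Theorem 13.11]{BPR}, or the bounds on the minimum of a polynomial over a bounded semialgebraic set) to produce a rational point $\tilde p\in S$ whose coordinates have bit size polynomial in $\tau$ and $(k(\delta+1))^{O(n(N-1))}$, i.e. lying in a box of radius $2^{\tau^{O(1)}(k(\delta+1))^{O(n(N-1))}}$. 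Fourth, I would convert a bound on the coordinates $|p_\ba|$ into a bound on $\|\tilde p\|=\max_{[-1,1]^n}|\tilde p(\x)|$: since $\tilde p$ has at most $N=\binom{n+d}{d}$ monomials and on $[-1,1]^n$ each monomial is bounded by $1$, one has $\|\tilde p\|\le N\max_\ba|p_\ba|$, and the extra factor $N$ is absorbed into $\tau^{O(1)}$; finally reassemble to obtain $B$ of the claimed order $\tau^{O(1)}(k(\delta+1))^{O(n(N-1))}$.

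\textbf{Main obstacle.}
The delicate point is the third step: the generic sampling theorems in \cite{BPR} bound the bit size of \emph{some} rational point in each connected component of the \emph{solution set}, but we specifically need a point that lies in the \emph{interior} $S$, since only interior points of $\nneg{K}_d$ (equivalently, points strictly inside the half-space condition $\tilde p>1$) give genuine unrepresentability certificates. Because the inequality $\tilde p>1$ in \eqref{eq:qe2} is strict and $S$ is open with nonempty interior, any sample point returned by the algorithm on the (full-dimensional) set defined by \eqref{eq:qe2} automatically lies in $S$; one must however check that the strict inequality does not degrade the bounds — this is handled by the standard device of replacing $\tilde p>1$ by $\tilde p\ge 1+\eta$ for an infinitesimal or sufficiently small rational $\eta$ and using the ball contained in $S$ to guarantee feasibility, which costs only a constant factor in the exponents. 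A secondary bookkeeping point is tracking how the number of blocks ($=1$) and the parameter count $N-1$ enter the exponents in the \cite{BPR} estimates, so that the final exponent is $O(n(N-1))$ rather than something worse; this is a direct substitution into their complexity formula for real quantifier elimination.
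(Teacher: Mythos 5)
Your proposal follows essentially the same route as the paper: eliminate the quantified block $\x$ from \eqref{eq:qe2} using the quantitative quantifier-elimination bounds of \cite{BPR}, then exploit that $S$ is open and non-empty to sample a rational point of controlled bit size (the paper uses \cite[Theorem 4.1.2]{BPR96} after turning the output formula into strict inequalities, which is exactly the role your ``main obstacle'' discussion plays), and finally convert the coordinate bit-size bound into a bound on $\|\tilde p\|$ via the sup-norm on $[-1,1]^n$. The differences are only in which specific statements of \cite{BPR}/\cite{BPR96} are cited and in minor bookkeeping of the intermediate degree and bit-size estimates, so the argument is correct and matches the paper's proof.
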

\begin{proof}
  We start by providing some quantitative bounds on the quantifier-free formula
  which defines ${S} \subset \R^{N-1}$ obtained by eliminating the quantified
  variables $\x=(x_1,\ldots,x_n)$ in \eqref{eq:qe2}. By \cite[Theorem 14.16]{BPR}
  such a formula satisfies the following properties
  \begin{itemize}
  \item It can be obtained with polynomials of degree lying in $(\delta+1)^{O(n)}$;
  \item the bit size of the coefficients of these polynomials lies in $\tau
    (\delta+1)^{O(n(N-1))}$;
  \item this formula is a disjunction of $k^{n+1}\delta^{O(n)}$ conjunctions of
    $k^{n+1}\delta^{O(n)}$ disjunctive formulas of polynomial inequalities involving
    $k^{n+1}\delta^{O(n)}$ polynomials.
  \end{itemize}

  Recall that since $S$ is open, it coincides with its interior $\text{Int}(S)$.
  We aim at computing one point with rational coordinates in $S$. To do this, we
  just put these disjunctions in closed form, replace non-strict inequalities by
  strict inequalities and call an algorithm for computing at least one point
  with rational coordinates in $S$; see e.g.~\cite[Theorem 4.1.2]{BPR96}.

  Note that the input to such an algorithm is a system of polynomial strict
  inequalities in $\R[p_\ba : \ba \in \N^n_d]$ of degree $(\delta+1)^{O(n)}$ with bit size
  coefficients in $\tau (\delta+1)^{O(n(N-1))}$. By~\cite[Theorem 4.1.2]{BPR96}, if the
  semialgebraic set defined by the input is non-empty, then it outputs a point
  with rational coordinates of bit size bounded by $\tau^{O(1)} \left( k
    (\delta+1)\right)^{O(n(N-1))}$.

  All in all, this bounds the bit size of the coefficients of some polynomial
  $\tilde{p} \in S$ (with rational coordinates). Since the number of
  these coefficients is $N-1$, the $2$-norm of $\tilde{p}$ still lies in
  \[\tau^{O(1)}  \left( k (\delta+1)\right)^{O(n(N-1))}.\]
  Finally, observe that $\|\tilde{p}\|=\min_{\pt \in [-1,1]^{n}}\tilde{p}(\pt)$
  is bounded above by the $2$-norm of $\tilde{p}$.
\end{proof}
\begin{corollary}
  \label{bound_pessimistic}
  Let $\tau_\y$ and $\tau_\g$ bound the bit-size of $\y$ and $\g$, respectively,
  and let $d_\g$ be a bound on the degrees of $g_1,\ldots,g_k$. Let $\tau$ be as in
  \eqref{tau} and $\delta=\max\{d+1,d_\g\}$.
  If $\y \not\in \mathscr{M}(K)_d$, then the degree of unrepresentability of $\y$
  in $K$ is in
  $$
  \gamma(n,\g) \, d^{3.5 \L n} \, \tau^{O(\L n)} \, \left({k}(\delta+1)\right)^{O(\L n^2(N-1))}.
  $$
\end{corollary}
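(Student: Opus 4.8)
The plan is to combine the two ingredients that have just been assembled: the Baldi--Mai bound \eqref{boundBM} on the degree $D$ of a Putinar representation of a polynomial in $\text{Int}(\nneg{K}_d)$, and the bound on $\|\tilde p\|$ from \Cref{prop:bound_eps}. First I would invoke \Cref{cor:equiv_B} to produce an unrepresentability certificate; but rather than use an arbitrary such $p$, I would take $p = \tilde p + 1$ where $\tilde p \in S$ is the explicit rational point of small bit-size furnished by \Cref{prop:bound_eps}, after translating the discussion of the set $S$ (defined by \eqref{eq:qe}--\eqref{eq:qe2}) back to a polynomial $p \in L_\y$. By construction $p \in 1+Q(\g)$, $\riesz_\y(p)=0$, and $p^* \geq 1+m$ for a fixed $m$ which, as explained in the text preceding \eqref{eq:qe}, we may normalise to $m=1$; hence $p^* - 1 \geq 1$ and $\|p-1\| = \|\tilde p\| \leq B$.

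Next I would feed this into the estimate for $\epsilon(p-1)$. Since $p-1 \in \text{Int}(\nneg{K}_d)$ (by \Cref{lem:interior}, because $p-1$ is positive on the compact set $K$: it equals $p^*-1\ge 1$ at its minimum over $K$), the hypothesis of \eqref{boundBM} applies to $f = p-1$, after the preliminary normalisations $1-\sum_i x_i^2 \in Q(\g)$ and $\|g_i\| \leq \tfrac12$ (which we assume throughout the section, as in the setup). Then
\[
\epsilon(p-1) = \frac{(p-1)^*}{\|p-1\|} = \frac{p^*-1}{\|\tilde p\|} \geq \frac{1}{B},
\]
so $\epsilon(p-1)^{-1} \leq B$, and substituting the bound of \Cref{prop:bound_eps} for $B$ into \eqref{boundBM} gives
\[
D \;\in\; \gamma(n,\g)\, d^{3.5\mathfrak{L}n}\, \epsilon(p-1)^{-2.5\mathfrak{L}n}
\;\subseteq\;
\gamma(n,\g)\, d^{3.5\mathfrak{L}n}\, B^{2.5\mathfrak{L}n}.
\]
Finally, raising the bound $B \in \tau^{O(1)}(k(\delta+1))^{O(n(N-1))}$ of \Cref{prop:bound_eps} to the power $2.5\mathfrak{L}n$ absorbs the exponent $\mathfrak{L}n$ into the $O(\cdot)$, yielding $B^{2.5\mathfrak{L}n} \in \tau^{O(\mathfrak{L}n)}\,(k(\delta+1))^{O(\mathfrak{L}n^2(N-1))}$, and multiplying by $d^{3.5\mathfrak{L}n}\gamma(n,\g)$ gives precisely the claimed estimate for the unrepresentability degree. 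By \Cref{unrepr_deg} the minimal $D$ with $p \in 1+Q(\g)[D]$ for \emph{some} valid $p$ is no larger than the $D$ we exhibited for this particular $p$, which is what the statement asserts.

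The main obstacle I anticipate is bookkeeping rather than conceptual: one must check that the normalisations required by \eqref{boundBM} ($1-\sum x_i^2 \in Q(\g)$ and $\|g_i\|\le \tfrac12$) can be imposed without spoiling the input-size parameters $\tau$, $\delta$, $d_\g$, $N$ used in \Cref{prop:bound_eps} — i.e. that rescaling the $g_i$ and passing to the embedding $L_\y \cong \R^{N-1}$ changes $\|p-1\|$, the degrees, and the bit-sizes only up to factors already absorbed in the $O(\cdot)$'s. One also has to be slightly careful that the $\|\cdot\|$ of \Cref{prop:bound_eps} (sup-norm on $[-1,1]^n$) is exactly the $\|\cdot\|$ in $\epsilon(f)$, and that the polynomial $\tilde p$ returned there indeed satisfies $p^* > 1$ strictly (so that $p-1$ lies in the \emph{interior}), which is guaranteed since $\tilde p \in S$ and $S$ was shown in \Cref{lem:formula4} to consist of polynomials with $\tilde p > 1$ on $K$, hence $(\tilde p)^* \ge 1$ on the compact $K$; the strict normalisation $p^*\ge 1+m$ with $m=1$ secures the quantitative gap. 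Once these routine compatibilities are verified, the corollary follows by direct substitution.
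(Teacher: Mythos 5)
Your strategy coincides with the paper's: produce the certificate via \Cref{cor:equiv_B}, instantiate it by the explicitly bounded rational point coming from the quantifier-elimination analysis of \Cref{prop:bound_eps}, lower-bound $\epsilon(p-1)$ by the reciprocal of (roughly) $\|p\|$, and substitute into \eqref{boundBM}. One step fails as written, though: you set $p=\tilde p+1$ for $\tilde p\in S$. By \Cref{lem:formula4}, $S=L_\y\cap\left(1+\text{Int}(\nneg{K}_d)\right)$, so the point $\tilde p$ returned by \Cref{prop:bound_eps} \emph{already} satisfies both $\riesz_\y(\tilde p)=0$ and $\tilde p>1$ on $K$; adding $1$ gives $\riesz_\y(\tilde p+1)=y_0>0$, so your $p$ leaves the hyperplane $L_\y$ and is no longer an unrepresentability certificate. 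The fix is simply to take $p=\tilde p$, after which $\|p-1\|\le 1+\|p\|\le 1+B$, and the chain of inequalities becomes exactly the one in the paper's proof.

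With $p=\tilde p$ the numerator $p^*-1$ is only guaranteed positive, not $\ge 1$, so the step $\epsilon(p-1)\ge 1/(1+\|p\|)$ still requires the normalization $p^*=1+m$ with $m=1$. You appeal to this normalization and so does the paper, but neither argument explains how to impose $p^*\ge 2$ on the specific $\tilde p$ of \Cref{prop:bound_eps} without losing control of $\|p\|$; replacing $\tilde p$ by $2\tilde p$ (still in $L_\y$, with $(2\tilde p)^*-1>1$ and $\|2\tilde p-1\|\le 2B+1$) settles this at the cost of a constant absorbed by the $O(\cdot)$. Since the paper elides the same point, the only genuine error to repair in your proposal is the spurious ``$+1$''.
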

\begin{proof}
  With the notation introduced in \eqref{boundBM}, by \Cref{cor:equiv_B}, there exists
  $p \in 1+Q(\g)[D]$ and by applying \cite[Th.~1.7]{baldi2022effective} and
  \Cref{prop:bound_eps}, the degree $D$ is bounded above by
  \begin{align*}
    & \gamma(n,\g) \, d^{3.5 \L n} \, \epsilon(p-1)^{-2.5 \L n} \\
    & \leq \gamma(n,\g) \, d^{3.5 \L n} \, (1+\|p\|)^{2.5 \L n} \\
    & \leq \gamma(n,\g) \, d^{3.5 \L n} \, (\tau^{O(1)} \left({k}(\delta+1)\right)^{O(n(N-1))})^{2.5 \L n} \\
    & = \gamma(n,\g) \, d^{3.5 \L n} \, \tau^{O(\L n)} \, \left({k}({\delta}+1)\right)^{O(\L n^2(N-1))}
  \end{align*}
\end{proof}

We terminate with a bivariate example showing that the bound of \Cref{bound_pessimistic}
is usually quite pessimistic.

\begin{example}[{\Cref{example_didier}} continued]
  \label{example_didier_suite}
Let $\y \in \R^{28}$ be the vector defined in \Cref{example_didier}.
Consider the polynomial
$$
p = 1+\frac89 (1-x_1^2-x_2^2)
$$
One checks that $p \in 1+Q(1-x_1^2-x_2^2)$ and $\riesz_\y(p)= y_{00}(1+\frac89)-\frac89(y_{20}+y_{02})=0$.
Since $K=S(1-x_1^2-x_2^2)$ is compact, $p$ certifies that the conic program defined in
\Cref{prop:feas} is strongly feasible, in other words, that $\y \not\in \mathscr{M}(K)_d$.
  \hfill$\Box$
\end{example}

\section{Conclusions and perspectives}

The goal of this work was to undertake a systematic analysis of the computational complexity of the truncated moment problem on semialgebraic sets. Preliminary results concern the existence of algebraic certificates for vectors that are not representable as moments of measures, and upper bounds on the degree of SOS representations of these certificates.

Our contribution offers several challenges and research directions in the computational aspects of the truncated moment problem, let us mention a few. One of these is the need of efficient algorithms for classes of polynomial systems with Vandermonde structure as that defined in the routine {\bf find\_measure}. 

A second one is to refine the quantifier-elimination bound given in \Cref{bound_pessimistic}. Unlike the viewpoint of the so-called effective Putinar Positivstellensatz introduced in \cite{baldi2022effective}, for which degree bounds depend on the polynomial itself, it is clear from our analysis that for the complexity analysis of the TMP one needs to give uniform degree bounds that only depend on the input of the TMP. One way of getting such uniform bounds is to consider manifestly positive polynomials such as those in $1+Q(\g)$ for compact $K=S(\g)$.

A final perspective is to extend our analysis to the more general case of basic closed semialgebraic sets, not necessarily compact (see {\it e.g.} \cite{blekherman2012truncated} and \cite[]{blekhermanCoreVariety}).

\section{Acknowledgments}
The authors thank Lorenzo Baldi for discussions concerning the degree bounds of
SOS-certificates.
This work is supported by the
{European Commission Marie Sklodowska-Curie Innovative Training Network}
POEMA (Polynomial Optimization, Efficiency through Moments and Algebra, 2019-2023);
by the {Agence Nationale de la
  Recher\-che (ANR)}, grant agreements
{ANR-18-CE33-0011} (SESAME), {ANR-19-CE40-0018}
(De Rerum Natura), {ANR-21-CE48-0006-01} (HYPERSPACE); by the joint
ANR-{Austrian Science Fund FWF} grant agreement {ANR-19-CE48-0015} (ECARP); by the
EOARD-AFOSR grant agreement {FA8665-20-1-7029}.

\end{document}